\newcommand{\R}{\mathbb{R}}	
\newcommand{\Z}{\mathbb{Z}}
\newcommand{\C}{\mathbb{C}}
\newcommand{\N}{\mathbb{N}}
\newcommand{\Rep}{\mathcal{R}(\Gamma)}
\newcommand{\Frho}{\mathcal{F}_\rho}
\newcommand{\Hrho}{\mathcal{H}_\rho}
\newcommand{\M}{\mathcal{M}}
\newcommand{\Mrho}{\mathcal{M}_\rho}
\newcommand{\SL}{\operatorname{SL}_2(\C)}
\newcommand{\SLn}{\operatorname{SL}_n(\C)}
\newcommand{\SU}{\operatorname{SU}_2}
\newcommand{\PSL}{\operatorname{PSL}_2(\C)}
\newcommand{\Sl}{\mathfrak{sl}_2(\C)}
\newcommand{\Slrho}{\mathfrak{sl}_2^\rho}
\newcommand{\Slk}[1]{\mathfrak{sl}_2(\C[t]/(t^{#1}))}
\newcommand{\GX}{(\SL\times\SL,\SL)}
\newcommand{\Aut}{\operatorname{Aut}}
\newcommand{\Ad}{\operatorname{Ad}}
\newcommand{\Hom}{\operatorname{Hom}}
\newcommand{\diff}{\mathit{diff}}
\newcommand{\Diff}{\mathrm{Diff}}
\newcommand{\Id}{\operatorname{Id}}
\newcommand{\An}{\mathfrak{An}_\C}
\newcommand{\Sch}{\mathfrak{Sch}}
\newtheorem{proposition}{Proposition}
\newtheorem{lemma}{Lemma}
\newtheorem{theorem}{Theorem}
\theoremstyle{definition}
\newtheorem*{remark}{Remark}
\theoremstyle{definition}
\newtheorem{corollary}{Corollary}
\theoremstyle{definition}
\newtheorem*{definition}{Definition}
\title{On the Teichmüller stack of homogeneous space of $\SL$}
\author{Théo JAMIN}
\address{Théo JAMIN\\ Laboratoire Angevin de REcherhe en MAth\'ematiques\\Universit\'e d'Angers, Universit\'e d'Angers\\ F-49045 Angers Cedex, France}
\email{\href{mailto:tjamin@math.univ-angers.fr}{tjamin@math.univ-angers.fr}}
\keywords{representation variety,  Teichmüller space and analytic stacks}
\date{\today}
\begin{document}
\thispagestyle{empty}
\maketitle

\begin{abstract}
    Let $\Gamma$ be a discrete torsion-free co-compact subgroup of $\SL$. E. Ghys has shown in \cite{Ghys} that the Kuranishi space of $\M=\SL/\Gamma$ is given by the germ of the representation variety $\Hom(\Gamma,\SL)$ at the trivial morphism and gave a description of the complex structures given by representations. In this note, we prove that for all \textit{admissible} representation, i.e. which allow to construct compact complex manifold by this description, the representation variety (pointed at this representation), leads to a complete family even at singular points. Hence, we will consider the \textit{(admissible) character stack} $[\Rep^a/\SL]$, where $\Rep^a$ stands for the open subset formed by admissible representations with $\SL$ acting by conjugation on it and show that this quotient stack is an open substack of the Teichmüller stack of $\M$.
\end{abstract}

\section{Introduction}

Let $\Gamma$ be a discrete co-compact subgroup of  $\SL$ and let $\Rep$ be the associated $\SL$-representation variety $\Hom(\Gamma,\SL)$. Take a representation $\rho \in \Rep$ and consider the following right action 
\begin{align}\label{DefinitiondesMrho}
    \Gamma \times \SL \longmapsto \ & \SL\nonumber \\
    (\gamma,x) \longmapsto \ & \gamma \underset{\rho}{\bullet} x= \rho(\gamma)^{-1}x\gamma 
\end{align}
When this action is free and properly discontinuous we say that $\rho$ is \textit{admissible} and we denote by $\Mrho$ the corresponding quotient manifold and by $\Rep^a$ the set of admissible representations. One can show \cite[Lemme\, 2.1,\, p.115]{Ghys} that $\Rep^a$ and $\Rep$ coincide on a open neigborhood of the trivial morphism $\rho_0:\Gamma \to \Id$. Theorem $A$ of \cite[p.115]{Ghys} states that the Kuranishi space of $\SL/\Gamma$ is the analytic germ of algebraic variety $\Rep$ at the trivial morphism $\rho_0:\Gamma \to \Id$. We will show that this result can be extended in a global version:
\begin{theorem}\label{TeichmullerStack}
    The quotient stack
    \[
        [\Rep^a/\SL]
    \]
    where $\SL$ act by conjugaison is an open substack of the Teichmüller stack of $\SL/\Gamma$.
\end{theorem}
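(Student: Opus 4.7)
The plan is to exhibit a morphism of analytic stacks $\Phi\colon [\Rep^a/\SL]\to\mathcal{T}(\M)$ coming from a tautological family over $\Rep^a$, and to show that it is an open immersion by verifying pointwise completeness at every admissible $\rho$ together with injectivity on isomorphism classes.

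\textbf{Construction of the family and descent.} On $\SL\times\Rep^a$ the fiberwise action $\gamma\cdot(x,\rho)=(\rho(\gamma)^{-1}x\gamma,\rho)$ of $\Gamma$ is free and properly discontinuous by admissibility, so the quotient defines a holomorphic family $\mathcal{X}\to\Rep^a$ whose fiber over $\rho$ is $\Mrho$. Gluing a $C^\infty$ fundamental domain over a contractible open of $\Rep^a$ produces local smooth trivializations of $\mathcal{X}$, hence a neighborhood of $\rho$ in $\Rep^a$ really parametrizes a family of complex structures on the underlying smooth manifold of $\Mrho$. Left multiplication by $g\in\SL$ on the first factor intertwines the $\Gamma$-action for $\rho$ with the one for $g\rho g^{-1}$, so the family is $\SL$-equivariant and descends to $[\Rep^a/\SL]$, yielding the classifying morphism $\Phi$.

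\textbf{Pointwise completeness.} The main analytic input is that for every $\rho\in\Rep^a$ the germ $(\Rep,\rho)$ is a Kuranishi space for $\Mrho$. Ghys's theorem treats only $\rho=\rho_0$; I would extend it by using that $\Mrho$ carries a locally homogeneous $(\SL,\SL)$-structure with holonomy $\rho$. Any small complex deformation of $\Mrho$ parametrized by a germ $(B,0)$ inherits a family of such structures with varying holonomy, producing a classifying map $(B,0)\to(\Rep,\rho)$ pulling $\mathcal{X}$ back to the given deformation. To identify the Kodaira–Spencer map with this comparison on tangent spaces one uses the left-invariant framing of $T\SL$, which presents $T\Mrho$ as the flat bundle associated with $\Ad\rho$ and so gives a natural isomorphism $H^1(\Mrho,T\Mrho)\simeq H^1(\Gamma,\Slrho)$ matching the Zariski tangent space of $\Rep$ at $\rho$ modulo $1$-coboundaries. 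The main obstacle is that $\Rep$ need not be smooth at $\rho$: one has to work entirely at the level of analytic germs and match the higher obstructions to deforming $\Mrho$ with the equations cutting out $\Rep$ inside $\Hom(\Gamma,\SL)$, so that the universal property survives at singular points of the base.

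\textbf{Injectivity and open immersion.} Once pointwise completeness is in hand, $\Phi$ is formally étale by the versality criterion applied at each admissible $\rho$. To upgrade étaleness to an open immersion, I would show that $\Phi$ is a monomorphism: two admissible representations give $\mathcal{T}(\M)$-isomorphic objects if and only if they are $\SL$-conjugate. A biholomorphism $\Mrho\simeq\mathcal{M}_{\rho'}$ isotopic to the canonical smooth identification lifts to an equivariant biholomorphism of $\SL$; being isotopic to the identity on the universal cover it must be left translation by some $g\in\SL$, and equivariance then forces $\rho'=g\rho g^{-1}$. Combined with étaleness from the previous step, this identifies $[\Rep^a/\SL]$ with an open substack of $\mathcal{T}(\M)$, which is the claim. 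The hard part is Step~3 on completeness at singular $\rho$; once established, the stack-theoretic assembly is essentially formal.
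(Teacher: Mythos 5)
Your overall architecture (tautological family over $\Rep^a$, completeness at every admissible point, then identification of isomorphisms to get the groupoid $\SL\times\Rep^a\rightrightarrows\Rep^a$) is the same as the paper's, but the central step is asserted rather than proved. The sentence ``any small complex deformation of $\Mrho$ parametrized by a germ $(B,0)$ inherits a family of such structures with varying holonomy'' is precisely the content of the completeness theorem, not a tool you can invoke: the Ehresmann--Thurston principle governs deformations of $\GX$-structures, and says nothing about why an arbitrary deformation of the \emph{complex structure} of $\Mrho$ remains locally homogeneous. This is exactly what fails to be formal and what the paper establishes, order by order, by comparing Douady's sheaves $Q_\rho^n$ with formal deformations $\rho_n=\exp\bigl(\sum c_i t^i\bigr)\rho$ of the representation, using the four-lemma, the Heusener--Porti identification of the obstruction class in $H^2(\Gamma,\Slrho)$, and Artin approximation to pass from a formal to a convergent solution. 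Likewise, your claimed ``natural isomorphism $H^1(\Mrho,T\Mrho)\simeq H^1(\Gamma,\Slrho)$ from the left-invariant framing'' conflates the flat sheaf $\Frho$ with the holomorphic tangent sheaf $\Theta_\rho$: that the inclusion $\Frho\hookrightarrow\Theta_\rho$ induces an isomorphism on $H^1$ and an injection on $H^2$ is Proposition \ref{embedding} of the paper, and its proof is genuinely nontrivial --- it reduces to group cohomology via Cartan's Theorem B on the Stein manifold $\SL$, and then needs the vanishing $H^1(\Gamma,\Xi_\rho)=0$, obtained from Ghys's results that holomorphic functions invariant under $\ker\rho$ are constant (the Zariski closure of $\ker\rho$ being all of $\SL$) together with an inflation--restriction argument. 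None of this appears in your sketch, and without it the ``pointwise completeness'' and the Kodaira--Spencer identification are unsupported; your own closing remark that ``the hard part is the completeness at singular $\rho$'' concedes the point.

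A secondary gap is in the injectivity/monomorphism step. The statement ``being isotopic to the identity on the universal cover it must be left translation by some $g\in\SL$'' is the conclusion of a real argument, not a one-line observation: one first needs that every lift of a biholomorphism is of the form $L_g\circ R_\delta$ (Ghys's Theorem B combined with Mostow rigidity applied to $\PSL$ and lifted to $\SL$), and then a discreteness-of-$\Aut(\Gamma)$ continuity argument along the isotopy to force $\delta=\Id$; this is the computation $\Aut^1(\Mrho)=\Aut^0(\Mrho)\simeq C_{\SL}(\rho(\Gamma))/\{\pm\Id\}$, which is exactly what lets the paper convert an isomorphism of tautological families over a base $B$ into a map $f:\Rho^a\supset\phi(B)\to\SL$ realizing it as conjugation, i.e.\ into an arrow of the quotient groupoid. (Here read $\Rho^a$ as $\Rep^a$.) So the skeleton of your proposal matches the paper, but both load-bearing inputs --- the cohomological comparison underlying completeness and the automorphism computation underlying the monomorphism property --- are missing rather than merely compressed.
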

This theorem basically follows from two results, the completeness of the tautological family over the representation and the computation of some group of automorphisms of $\Mrho$ (which give the isotropy group of a point in the Teichmüller stack). More rigorously 
\begin{theorem}\label{completeness}
    For any admissible representation $\rho$, the deformation 
    \[
        \{ \Mrho\ \vert\ \rho\in \Rep^a\} \to \Rep^a
    \]
    pointed at $\rho$ is complete.
\end{theorem}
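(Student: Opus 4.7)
The strategy is to apply a Kodaira--Spencer--type completeness criterion: it suffices to show that at every admissible $\rho$, the Kodaira--Spencer map of the tautological family is surjective onto $H^1(\Mrho, T\Mrho)$. This criterion remains valid even when $\Rep^a$ is singular at $\rho$, since only the Zariski tangent space enters.

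I would first identify both sides of this map. The Zariski tangent space to $\Rep^a$ at $\rho$ is the space of $1$-cocycles $Z^1(\Gamma, \Slrho)$ with coefficient system $\Ad\circ\rho$. On the target side, right-trivializing $T\SL \cong \SL \times \Sl$ shows that the twisted $\Gamma$-action on $\SL$ lifts to $T\SL$ by $\gamma \cdot (x, \xi) = \bigl(\rho(\gamma)^{-1}x\gamma,\ \Ad(\rho(\gamma)^{-1})\xi\bigr)$. Since $\SL$ is Stein, Cartan's theorem B yields $H^q(\SL, T\SL) = 0$ for $q \geq 1$, and the Cartan--Leray spectral sequence for the Galois covering $\SL \to \Mrho$ degenerates to
\[
    H^{\bullet}(\Mrho, T\Mrho) \cong H^{\bullet}\bigl(\Gamma,\, \mathcal{O}(\SL)\otimes_\C \Sl\bigr),
\]
with $\Gamma$ acting diagonally. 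Computing the Kodaira--Spencer map on a $1$-parameter family of admissible representations $\rho_t$ then identifies it with the map induced in cohomology by the $\Gamma$-equivariant inclusion of $\Sl$ as constants inside $\mathcal{O}(\SL)\otimes\Sl$.

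Through the resulting $\Gamma$-equivariant short exact sequence
\[
    0 \longrightarrow \Sl \longrightarrow \mathcal{O}(\SL)\otimes\Sl \longrightarrow \bigl(\mathcal{O}(\SL)/\C\bigr)\otimes\Sl \longrightarrow 0,
\]
surjectivity of the Kodaira--Spencer map reduces to the injectivity of the connecting morphism $H^1\bigl(\Gamma,(\mathcal{O}(\SL)/\C)\otimes\Sl\bigr) \to H^2(\Gamma, \Slrho)$, which would follow a fortiori from the vanishing of the $H^1$ group itself.

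The hard part is precisely this cohomological statement at a general admissible $\rho$. At the trivial representation $\rho_0$, the analogous vanishing is obtained in \cite{Ghys} through an averaging procedure exploiting the properly discontinuous $\Gamma$-action together with the Stein geometry of $\SL$. For general $\rho$, the coefficient system is genuinely twisted; the plan would be to adapt that argument by decomposing $\mathcal{O}(\SL)$ under the compact real form $\SU$ via algebraic Peter--Weyl, and treating each finite-dimensional isotypic component separately, thereby reducing the infinite-dimensional vanishing to a collection of finite-dimensional cohomological statements on twisted $\Gamma$-modules. Once surjectivity of the Kodaira--Spencer map is established at every $\rho \in \Rep^a$, the completeness criterion concludes.
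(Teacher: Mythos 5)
Your reduction of completeness to surjectivity of the Kodaira--Spencer map is exactly the step that fails, and it fails at the very points the theorem is about. The classical completeness criterion of Kodaira--Spencer requires the base of the family to be \emph{smooth} at the base point; the claim that ``only the Zariski tangent space enters'' is false in general. For instance, pull back a one-dimensional universal Kuranishi family $\mathcal{K}\to(\C,0)$ along the map $g(x,y)=y$ from the cuspidal cubic $S=\{y^2=x^3\}$: the Kodaira--Spencer map of $g^*\mathcal{K}\to S$ at the origin is surjective on Zariski tangent spaces, yet the family is not complete there, since a classifying map $(\C,0)\to (S,0)$ inducing $\mathcal{K}$ would force $y(t)=\mathrm{unit}\cdot t$ and $y(t)^2=x(t)^3$, which is impossible. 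Since $\Rep$ (and hence $\Rep^a$) can be singular and even non-reduced at admissible $\rho$ --- the paper stresses that one must work with the scheme structure precisely for this reason --- surjectivity of $\mathcal{KS}_\rho$ (which the paper does prove, but only later and as a separate statement) gives first-order information and nothing more; it cannot yield the theorem. The paper's proof instead works order by order: using the Douady filtration $Q^n_\rho$ of the sheaf of germs of deformations over $(\C,0)$, it compares the obstruction theory for extending $n$-th order deformations of the complex structure with the obstruction theory for extending $n$-th order deformations of the representation (Heusener--Porti), the bridge being that the inclusion of flat into holomorphic coefficients induces an isomorphism $H^1(\Mrho,\Frho)\simeq H^1(\Mrho,\Theta_\rho)$ \emph{and an injection} $H^2(\Mrho,\Frho)\hookrightarrow H^2(\Mrho,\Theta_\rho)$; the $H^2$-injectivity is what transfers the vanishing of the geometric obstruction to the vanishing of the representation-theoretic obstruction, the four-lemma propagates the identification of $n$-th order deformations, and Artin's approximation theorem converts the resulting formal deformation of $\rho$ into a convergent one. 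None of this machinery appears in your proposal, and without it the singular case is untouched.

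A secondary gap: even the cohomological input you do invoke is left as a plan. The vanishing $H^1\bigl(\Gamma,(\mathcal{O}(\SL)/\C)\otimes\Sl\bigr)=0$ at a general admissible $\rho$ is not proved by your Peter--Weyl averaging sketch; the paper obtains it by a different and complete argument, namely the inflation--restriction sequence for $\Gamma_0=\ker\rho\trianglelefteq\Gamma$, the fact that $\Gamma_0$ is Zariski-dense in $\SL$ (so $\Xi_\rho^{\Gamma_0}=0$ and the twisted module is compared with the untwisted one), and Ghys's Theorem~4.1 giving $H^1(\Gamma,\Xi_{\rho_0})=0$. So even the part of your argument that parallels the paper (the identification $H^\bullet(\Mrho,\Theta_\rho)\simeq H^\bullet(\Gamma,\Hrho)$ via Cartan's theorem B, and the short exact sequence of constants inside holomorphic functions) stops short of a proof at the key vanishing.
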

The plan of this article is to review some notions about the geometry of the $\Mrho$, such as $(G,X)$-structure and the admissibility condition on representations given by a work of Guéritaud, Guichard, Kassel and Wienhard \cite{Gueritaud} and completed by Tholozan \cite{Tholozan}. We will conclude the first part with some computations of automorphisms groups, in particular $\Aut^1(\Mrho)$ which leads to the isotropy group of a point in the character stack and prove 
\begin{proposition}
    For any admissible representation $\rho$, the group $\Aut^1(\Mrho)\coloneqq \Aut(\Mrho) \cap \Diff^0(\Mrho)$ is equal to the quotient of centralizer of $\rho(\Gamma)$ in $\SL$ by $\{\pm \Id \}$. 
\end{proposition}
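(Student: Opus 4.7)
The plan is to produce an isomorphism from $Z_{\SL}(\rho(\Gamma))/\{\pm \Id\}$ onto $\Aut^1(\Mrho)$ by combining the left-translation action of the centralizer with the rigidity of the $(\SL\times\SL,\SL)$-structure on $\Mrho$.

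First, I would construct the action. For $g \in Z_{\SL}(\rho(\Gamma))$, the left translation $L_g(x) = gx$ commutes with the twisted action $\gamma \underset{\rho}{\bullet} x = \rho(\gamma)^{-1} x \gamma$ (since $g$ commutes with every $\rho(\gamma)$), so descends to a biholomorphism $\bar L_g$ of $\Mrho$. A smooth (non-holomorphic in general) path from $\Id$ to $g$ in the connected group $\SL$ projects to an isotopy from $\Id_{\Mrho}$ to $\bar L_g$, placing $\bar L_g$ in $\Aut^1(\Mrho)$. The center $\{\pm \Id\} \subset Z_{\SL}(\rho(\Gamma))$ will be absorbed into the kernel by identifying lifts $(a,b)$ and $(-a,-b)$ in the $(G,X)$-analysis below.

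Second, for surjectivity, I would invoke the rigidity of the $(G,X)$-structure on $\Mrho$ recalled earlier, with $G = \SL \times \SL$ acting on $X = \SL$ by $(a,b)\cdot x = axb$. The key step is to show every $\phi \in \Aut^1(\Mrho)$ preserves this structure, so admits a lift $\tilde\phi(x) = axb$ to the universal cover $\SL$. Infinitesimally, this amounts to the identification $H^0(\Mrho, T\Mrho) \cong \Sl^{\rho(\Gamma)}$, which follows from the parallelizability of $\Mrho$ combined with the vanishing $\Sl^\Gamma = 0$ coming from Borel density for the cocompact lattice $\Gamma \subset \SL$. Once such a global lift is in hand, the equivariance forced by $\phi \in \Diff^0$ acting trivially on $\pi_1(\Mrho) \simeq \Gamma$ (together with uniqueness of the equivariant lift) pins $a \in Z_{\SL}(\rho(\Gamma))$ and $b \in Z_{\SL}(\Gamma) = \{\pm \Id\}$. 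Finally, since $(a, b)$ and $(-a, -b)$ induce identical transformations of $X$ (and hence of $\Mrho$), the coset of $(a, b)$ modulo this diagonal $\{\pm \Id\}$ identifies $\Aut^1(\Mrho)$ with $Z_{\SL}(\rho(\Gamma))/\{\pm \Id\}$.

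The main obstacle is establishing that every $\phi \in \Aut^1(\Mrho)$ globally preserves the $(G,X)$-structure: passing from the infinitesimal identification $H^0(\Mrho, T\Mrho) \cong \Sl^{\rho(\Gamma)}$ to a global lifting through the universal cover is the technically delicate part, while the subsequent equivariance and kernel computations are essentially bookkeeping given the Zariski density of $\Gamma$.
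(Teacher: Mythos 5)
The easy inclusion is fine: for $g \in C_{\SL}(\rho(\Gamma))$ the left translation $L_g$ descends to $\Mrho$ and a smooth path from $\Id$ to $g$ in $\SL$ provides the isotopy, so $C_{\SL}(\rho(\Gamma))/\{\pm\Id\}$ does land in $\Aut^1(\Mrho)$. The genuine gap is in surjectivity, at exactly the point you flag as delicate: you need every $\phi \in \Aut^1(\Mrho)$ to lift to the universal cover as $x \mapsto axb$, i.e.\ to be an automorphism of the $\GX$-structure, and the justification you offer does not deliver this. The identification $H^0(\Mrho,\Theta_\rho)\simeq \Sl^{\rho(\Gamma)}$ only controls the identity component $\Aut^0(\Mrho)$ (it is the Lie algebra of the automorphism group): an element of $\Aut^1$ is isotopic to the identity through smooth diffeomorphisms, not through biholomorphisms, so it need not be obtained by integrating holomorphic vector fields --- the paper explicitly recalls (citing Meersseman) that $\Aut^0 \neq \Aut^1$ can happen, which is why the proposition requires an argument at all. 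Nor does the Ehresmann--Thurston/Tholozan rigidity of $(G,X)$-structures say that every biholomorphism of $\Mrho$ preserves that structure; this is precisely the content of Lemma \ref{LiftAutomorphisms}, which the paper proves via Ghys's Theorem B combined with Mostow rigidity: an arbitrary $\phi\in\Aut(\Mrho)$ lifts equivariantly with respect to some $\theta\in\Aut(\Gamma)$, Mostow rigidity extends $\theta$ (up to a sign character $\epsilon$) to a continuous automorphism $\Theta$ of $\SL$, holomorphy forces $\Theta$ to be inner, and Ghys's theorem identifies the remaining factor as a left translation, giving $\widetilde\phi = L_g\circ R_\delta$. Without this lemma (or a substitute) your argument is circular: you assume the structure-preserving lift in order to produce it.

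Once such a lift is granted, your bookkeeping essentially reproduces the paper's proof of Proposition \ref{aut1}: the isotopy forces the induced automorphism $\theta_t$ of $\Gamma$ to be constant equal to the identity (discreteness of $\Aut(\Gamma)$), which kills the right-translation part, and the equivariance condition then places $g$ in $C_{\SL}(\rho(\Gamma))$, with $\{\pm\Id\}$ as kernel; you should, however, also track the sign character $\epsilon\in\Hom(\Gamma,\{\pm\Id\})$ that a priori enters the equivariance \eqref{GammaEquivariance}. A minor further inaccuracy: $H^0(\Mrho,\Theta_\rho)\simeq \Sl^{\rho(\Gamma)}$ does not follow from ``parallelizability'' --- the holomorphic tangent bundle is the flat bundle $\SL\times_{\Ad_\rho}\Sl$, which is not holomorphically trivial for nontrivial $\rho$; the identification uses Ghys's argument that holomorphic functions invariant under the Zariski-dense subgroup $\ker(\rho)$ (or $\Gamma$) are constant.
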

Then, in a second part, after some cohomological considerations we demonstrate theorems \ref{completeness} and \ref{TeichmullerStack} and we briefly discuss the differences between the character stack and character variety, as a GIT quotient. We will also give some local informations through the computation of the Kodaira-Spencer map and results about equivariant transversal slices which, as germs, gives the Kuranishi space of $\Mrho$. To conclude this paper, we give an example of application. 

\section{Geometry of $\protect{\Mrho}$}

\subsection{$(G,X)$-structure}

In this section, we recall some general ideas of $(G,X)$-structure inspired by Ehresmann and developped by Thurston. 

A $(G, X)$-structure on a manifold $M$ is an atlas of charts with values in the model space X and whose transition functions are restrictions of elements of $G$. A $(G,X)$-manifold is a manifold endowed with this structure. Note that every $G$-invariant geometric structure $g$ on $X$, in the sense of Gromov \cite{GromovGeometricStructure}, defines a structure (locally isomorphic to $g$) on $M$. For example, a holomorphic metric $G$-invariant on $X$ defines a holomorphic metric on $M$.

In the case of $\M=\SL/\Gamma$, we have an obvious $\GX$-structure given by left/right translations on $\SL$ and the Killing form on $\Sl$, which is bi-invariant and non-degenerate, induces a holomorphic metric on $\M$ with constant negative curvature, computed in \cite{Ghys}. 
We call a $(G,X)$-morphism between two $(G,X)$-manifold, a morphism between manifolds which is a local diffeomorphism given in charts of the $(G,X)$-structure by an element of $G$. When dealing with the natural morphism from the universal covering $\widetilde{M}$ of a $(G,X)$-manifold $M$ to $X$, one recover the usual notion of developping and holonomy maps: 
\[
    D:\widetilde{M} \to X,\qquad h:\pi_1(M) \to G
\]  
which satisfies $D(\gamma.x)=h(\gamma).D(x)$ for $\gamma\in \pi_1(M)$ and $x\in \widetilde{M}$. The well-known Ehresmann-Thurston principle \cite{Thurston} states that this holonomy map defines a local homeomorphism from the set of marked $(G,X)$-structures on $M$ to the topological quotient $\Hom(\pi_1(M),G)/G$ (see also \cite{Goldman}). In other word, if $M$ is a $(G,X)$-manifold and $h'$ a representation close to the holonomy $h_0$ of $M$, there exists a $(G,X)$-structure on $M$ with holonomy given by $h'$ and two $(G,X)$-structures are equivalent if their corresponding holonomies are conjugated by a small element in $G$. But the topological quotient $\Hom(\pi_1(M),G)/G$ can be quite bad, even non-reduced \cite{MillsonKapovich} and we want to consider it as the stack for the global point of view, see section \ref{sectionStack}.

When the developping map is a diffeomorphism, we say that the $(G,X)$-structure is complete and we can recover it by taking the quotient of the whole $X$ by $h(\pi_1(M))$. The completeness of such a structure on $\M$ is equivalent to the completeness of the holomorphic metric on $\M$, in the sense that all local geodesics can be extended in global geodesics.

A result of Tholozan \cite[Theorem\, 3,\, p.1923]{Tholozan} state, in the particular case of $\SL$, that the set of complete $\GX$-structure form a union of connected component in the set of deformation of this structure. Hence, we cannot have a continuous deformation of a complete $\GX$-structure with non-complete fibers.

\subsection{Admissibility condition}

We refer to \cite{Tholozan}, \cite{kasselproper} or \cite{Gueritaud} for details on properness condition.

In order to construct the Kuranishi space of $\M$, Ghys show that the action \eqref{DefinitiondesMrho} is free and properly discontinuous for, at least, representations that are close to the trivial one (see \cite[Lemma\, 2.1,\, p.115]{Ghys}). This result was widely improved:\\
\textit{\cite[Theorem\, 1.3,\, p.3]{kasselproper}\,
    Assume that $\Gamma$ is residually finite and not a torsion group. Then $\rho\in \Rep$ is admissible if, and only if, for all $R>0$,
    \begin{align}\label{propernesscondition}
        \mu(\gamma)-\mu(\rho(\gamma))>R
    \end{align}
    for almost all $\gamma\in \Gamma$.}
where $\mu : \SL\to \R_+$ is the projection of a fixed Cartan decomposition of $\SL$ given by $\SU A^+ \SU$\footnote{Note that this is not a diffeomorphism, in opposition to the "classical" Cartan decomposition, due to the non-unicity in this decomposition. Only the projection on $A^+$ is uniquely determined. See \cite[Chapitre 9, Theorem 1.1]{Helgason}} on $A^+\simeq \R_+$. This means that $\rho$ is admissible if its image "drift away at infinity" from $\Gamma$.

In this note, $\Gamma$ is the fundamental group of a hyperbolic 3-manifold thus it is residually finite and without torsion. This theorem state for example that each representation with image contained in a compact subset of $\SL$ is admissible. 

Moreover, we have the following key result for this note:
\begin{proposition}\cite[Corollary\, 1.18]{Gueritaud}
    The set of admissible representations $\Rep^a$ is a (classical) open in $\Rep$.
\end{proposition}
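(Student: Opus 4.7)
The plan is to combine Kassel's asymptotic criterion (recalled above) with a quantitative sharpness estimate that, for cocompact $\Gamma$, is equivalent to admissibility but is much better suited to perturbation arguments. Throughout, fix $\rho_0\in\Rep^a$ and a finite symmetric generating set $S$ of $\Gamma$ (available because $\pi_1$ of a compact hyperbolic $3$-manifold is finitely presented), and recall that $\Rep$ is topologized by convergence on $S$.

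First I would upgrade the Kassel condition $\mu(\gamma)-\mu(\rho_0(\gamma))\to\infty$ to a uniform \emph{sharpness} inequality
\[
    \mu(\rho_0(\gamma))\le c_0\,\mu(\gamma)+C_0
\]
for some $c_0\in(0,1)$ and $C_0\ge 0$. Since $\Gamma\subset\SL$ is cocompact, the Švarc--Milnor lemma makes $\mu\vert_\Gamma$ quasi-isometric to the word length $\lvert\cdot\rvert_S$; this converts the pointwise divergence of Kassel's criterion into a uniform linear bound along every geodesic ray of $\Gamma$ by a compactness argument on the visual boundary of the symmetric space $\SL/\SU$.

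Next I would show that sharpness is an open condition in $\Rep$. Writing a shortest word $\gamma=s_1\cdots s_n$ with $n=\lvert\gamma\rvert_S$ and setting $\delta_i(\rho)=\rho(s_i)\rho_0(s_i)^{-1}$, the cocycle expansion
\[
    \rho(\gamma)\rho_0(\gamma)^{-1}=\prod_{i=1}^{n}\rho_0(s_1\cdots s_{i-1})\,\delta_i(\rho)\,\rho_0(s_1\cdots s_{i-1})^{-1},
\]
combined with sub-additivity of $\mu$ and the quasi-isometry $\lvert\gamma\rvert_S\asymp\mu(\gamma)$, is expected to yield, for $\rho$ sufficiently close to $\rho_0$ on $S$, an estimate
\[
    \mu(\rho(\gamma))\le c_0\,\mu(\gamma)+C_0+\varepsilon(\rho)\,\mu(\gamma)
\]
with $\varepsilon(\rho)\to 0$ as $\rho\to\rho_0$ in $\Rep$. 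Choosing $\rho$ so that $\varepsilon(\rho)<1-c_0$ then preserves sharpness with a new coefficient $c<1$, and Kassel's criterion returns admissibility of $\rho$.

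The main obstacle is precisely the cocycle estimate: conjugation by the potentially large element $\rho_0(s_1\cdots s_{i-1})$ can inflate $\mu(\delta_i(\rho))$ in an uncontrolled way, so the naive requirement that the $\delta_i(\rho)$ be close to the identity on the finite set $S$ is not sufficient. One therefore needs a finer geometric argument in the symmetric space $\bbH^3$ producing a bound on $\sum_i\mu\bigl(\rho_0(s_1\cdots s_{i-1})\,\delta_i(\rho)\,\rho_0(s_1\cdots s_{i-1})^{-1}\bigr)$ which is linear in $n$ with slope going to $0$ as $\rho\to\rho_0$. This is exactly where \cite{Gueritaud} invokes the more flexible framework of Anosov representations, whose openness under deformation (in the spirit of Labourie) immediately delivers the proposition.
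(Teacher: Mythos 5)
Note first what you are being compared against: the paper gives no proof of this proposition at all --- it is imported verbatim from \cite[Corollary\,1.18]{Gueritaud}, so the paper's ``proof'' is the citation itself. Read as an independent argument, your proposal has two genuine gaps. (i) The opening step, upgrading Kassel's pointwise divergence $\mu(\gamma)-\mu(\rho_0(\gamma))\to\infty$ from \cite{kasselproper} to a uniform gap $\mu(\rho_0(\gamma))\le c_0\,\mu(\gamma)+C_0$ with $c_0<1$, is not a soft consequence of Švarc--Milnor plus a compactness argument on the visual boundary. In this setting (the cocompact inclusion $\Gamma\hookrightarrow\SL$ versus an arbitrary $\rho_0$), the equivalence between proper discontinuity and such a uniform domination is itself one of the substantial theorems of Guéritaud--Kassel and of \cite{Gueritaud} (proved via equivariant Lipschitz maps of $\bbH^3$, respectively via the Anosov characterization); asserting it as a preliminary reduction already assumes a strong form of the machinery behind the result. (ii) The perturbation step is, as you concede, not closed: the cocycle expansion cannot produce a bound with slope $\varepsilon(\rho)$, because $\mu(ghg^{-1})$ can exceed $\mu(h)$ by roughly $2\mu(g)$, and the conjugators $\rho_0(s_1\cdots s_{i-1})$ have $\mu$ growing linearly in $i$; the resulting sum is quadratic in the word length $n$, not $o(n)$, so closeness of $\rho$ to $\rho_0$ on the generating set gives no control of $\mu(\rho(\gamma))$ for long $\gamma$ by this route.

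Your proposed repair is to invoke the openness of Anosov representations ``as in \cite{Gueritaud}'' --- but that openness, applied to the pair $(j,\rho)$, is precisely the content of \cite[Corollary\,1.18]{Gueritaud}, i.e.\ of the statement to be proved. So the proposal, as written, ultimately reduces to the same citation the paper makes, with the intermediate estimates either unproven or demonstrably insufficient. The strategic picture you describe (admissibility $\Leftrightarrow$ uniform sharpness, then stability of sharpness under deformation) is indeed how the cited authors argue, so the outline is sound as a reading guide to \cite{Gueritaud}; but a self-contained proof would have to supply either the Lipschitz-contraction characterization or the Anosov characterization together with its openness, none of which is established here. (A minor point you may also record: admissibility asks for a free and properly discontinuous action, but for torsion-free $\Gamma$ freeness follows from proper discontinuity, since stabilizers are finite; so openness is genuinely a statement about properness only.)
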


\begin{remark}\label{Admissible variety is not Zariski open}
    Actually, Kassel's results are more precise and in particular one can show that $\Rep^a$ is not, in general, a Zariski open. It only happen in the "rigid case" that is to say when all admissible representations are rigids (i.e. they corresponds to isolated points in $\Rep$), see example \ref{example}.
\end{remark}

\subsection{Automorphisms groups}

Let $\phi$ be an automorphism of $\Mrho$ and $\widetilde{\phi}$ its lifted application to the universal cover. We will denote by $L_g$ (\textit{resp.} $R_g$) the left (\textit{resp.} right) translation by $g$ and by $\iota_g$ the conjugation by $g$.

\begin{lemma}\label{LiftAutomorphisms}
    Let $\phi$ be an automorphism of $\Mrho$. Then there exists $g$ and $\delta$ in $\SL$ such that $\widetilde{\phi}=L_g \circ R_\delta$. 
\end{lemma}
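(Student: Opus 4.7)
My plan is to show that the lift $\widetilde{\phi}: \SL \to \SL$ preserves the canonical $(\SL\times\SL,\SL)$-structure on $\Mrho$, from which it follows that $\widetilde\phi$ is globally the action of a single element of $\SL\times\SL$, i.e.\ a map of the form $L_g\circ R_\delta$. Recall that under this $(G,X)$-structure the universal cover of $\Mrho$ is $\SL$ itself, the developing map is the identity, and the holonomy sends $\gamma\in\Gamma$ to $(\rho(\gamma),\gamma)\in\SL\times\SL$; a lift of $\phi$ is then $\Gamma$-equivariant up to an automorphism $\sigma\in\Aut(\Gamma)$ induced by $\phi$, in the sense that $\widetilde\phi\circ\Phi_\gamma=\Phi_{\sigma(\gamma)}\circ\widetilde\phi$ where $\Phi_\gamma=L_{\rho(\gamma)^{-1}}\circ R_\gamma$.

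My main tool will be the canonical holomorphic Riemannian metric $\kappa$ on $\Mrho$ inherited from the bi-invariant Killing form on $\Sl$, of constant non-zero holomorphic sectional curvature, well-defined on $\Mrho$ because $\Phi_\gamma$ is a composition of isometries. The crucial step is to prove that every biholomorphism $\phi$ of $\Mrho$ preserves $\kappa$. After lifting to $\SL$ and using the left-trivialization, a holomorphic symmetric $2$-tensor on $\Mrho$ becomes a $\Gamma$-equivariant holomorphic function $\SL\to\operatorname{Sym}^2\Sl^*$. The compactness of $\Mrho$, the Zariski density of $\Gamma$ in $\SL$, and the uniqueness (up to scalar) of the $\Ad$-invariant symmetric bilinear form on the simple Lie algebra $\Sl$ would together reduce the space of such tensors to $\C\cdot \kappa$. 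Hence $\phi^*\kappa=\lambda\kappa$ for some $\lambda\in\C^*$, and the constancy of the non-zero scalar curvature then forces $\lambda=1$.

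Finally, once $\widetilde\phi$ is a holomorphic isometry of $(\SL,\kappa_{\SL})$, I would appeal to the classification of holomorphic isometries of the bi-invariant Killing metric on $\SL$: they are generated by the left translations $L_g$, the right translations $R_\delta$, and the holomorphic inversion $x\mapsto x^{-1}$. The inversion coset will be ruled out by the $\Gamma$-equivariance: if $\widetilde\phi(x)=gx^{-1}\delta$, equating both sides of $\widetilde\phi\circ\Phi_\gamma=\Phi_{\sigma(\gamma)}\circ\widetilde\phi$ as functions of $x$ would force a very restrictive conjugacy between $\rho(\Gamma)$ and $\Gamma$ of inversion type, incompatible with a general admissible $\rho$. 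This yields $\widetilde\phi=L_g\circ R_\delta$. The principal obstacle will be the dimension count $h^0(\Mrho,\operatorname{Sym}^2\Omega^1_{\Mrho})=1$: since the relevant flat holomorphic bundle on $\Mrho$ has non-unitary monodromy, the equality of its holomorphic sections with the $\Ad$-invariants does not follow directly from the Kähler setting and will require a more careful argument.
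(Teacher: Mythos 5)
The crucial step of your plan --- that every biholomorphism of $\Mrho$ preserves the Killing metric $\kappa$ because the space of holomorphic symmetric $2$-tensors on $\Mrho$ is $\C\cdot\kappa$ --- is not just missing a careful argument: the dimension count is false for many admissible $\rho$, including exactly the ones this paper cares about. In the right-invariant trivialization, a holomorphic section of $\operatorname{Sym}^2\Omega^1_{\Mrho}$ is a holomorphic map $f:\SL\to\operatorname{Sym}^2\Sl^*$ equivariant for the deck action, in which the fiberwise twist is $\operatorname{Sym}^2\Ad^*_{\rho(\gamma)}$. Invariance under $\ker\rho$ (Zariski dense by Ghys's Lemma 5.6, the fact used elsewhere in the paper) forces $f$ to be constant, and what remains is invariance of that constant under $\operatorname{Sym}^2\Ad^*$ of the Zariski closure of $\rho(\Gamma)$ --- not of $\Gamma$. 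Your appeal to ``Zariski density of $\Gamma$'' conflates these two groups. Whenever $\rho(\Gamma)$ is not Zariski dense the space is strictly bigger than $\C\kappa$: for the trivial representation the tangent bundle of $\SL/\Gamma$ is holomorphically trivial, so $h^0(\operatorname{Sym}^2\Omega^1)=6$, and for the admissible representations with finite abelian image (the only admissible ones in the Weeks-manifold example of the paper) one again gets several linearly independent holomorphic symmetric tensors, most of them degenerate and not proportional to $\kappa$. So the key assertion $\phi^*\kappa=\lambda\kappa$ is unproved precisely in the cases where the lemma is needed, and the obstacle is not the non-unitarity of the monodromy you flag but the failure of the uniqueness statement itself.

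The remaining steps are also only sketched where they need real input: the classification of holomorphic isometries of $(\SL,\kappa)$ requires completeness of the holomorphic metric and a Liouville-type rigidity statement, and the exclusion of the inversion coset cannot be settled by saying it is ``incompatible with a general admissible $\rho$'', since the lemma must hold for every admissible $\rho$; the correct argument is that equivariance of $x\mapsto gx^{-1}\delta$ forces $\rho\circ\theta=\iota_g|_\Gamma$ for some $\theta\in\Aut(\Gamma)$, so $\rho$ would be faithful with discrete cocompact image, contradicting admissibility (e.g.\ via Kassel's properness criterion \eqref{propernesscondition}). For comparison, the paper avoids metric considerations altogether: it lifts $\phi$, uses Mostow rigidity (applied to $\PSL$ and lifted) to write the induced automorphism of $\Gamma$ as a sign character times the restriction of an inner automorphism $\iota_\delta$ of $\SL$, and then invokes Ghys's Theorem B, which says that a biholomorphism of $\SL$ intertwining two admissible actions without twist must be a left translation; composing gives $\widetilde{\phi}=L_g\circ R_\delta$. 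If you want to pursue your metric route, you would first have to prove that automorphisms preserve $\kappa$ by some argument not relying on $h^0(\operatorname{Sym}^2\Omega^1_{\Mrho})=1$, which at present you do not have.
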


\begin{proof}
    This is using a particular case of theorem $B$ in \cite{Ghys}.\\
    Let $\rho \in \Rep$ and let $\phi$ an automorphism of $\Mrho$. This automorphism $\phi$ lifts to a biholomorphism $\widetilde{\phi}$ of $\SL$ such that there exists $\theta \in \Aut(\Gamma)$ such that the $\Gamma$-equivariance of $\widetilde{\phi}$ is 
    \begin{align}\label{GammaEquivariance}
        \widetilde{\phi}(\gamma \underset{\rho}{\bullet} x)= \theta(\gamma) \underset{\rho}{\bullet}\widetilde{\phi}(x),\ \forall \gamma \in \Gamma.        
    \end{align}
    Because $\SL$ has non-trivial center $\{\pm \Id\}$, we apply Mostow's rigidity to $\PSL$ and lift it to $\SL$. Hence, we know that there exists $\Theta$ a continuous group automorphism of $\SL$ and $\epsilon \in \Hom(\Gamma, \{\pm \Id\})$ such that $\theta =\epsilon. \Theta |_{\Gamma}$. Since $\phi$ is a holomorphic function, $\Theta$ as to be so. But, up to conjugation, the only continuous automorphism of $\SL$ is either the identity or the complex conjugation. Hence, $\Theta$ is an inner automorphism.

    Consider another representation morphism $\eta \in \mathcal{R}(\theta(\Gamma))$ such that 
    \[
        \Theta(\rho(\gamma))=\epsilon(\gamma).\eta (\theta(\gamma)),\ \forall \gamma \in \Gamma
    \]
    It is easy to see that $\Theta$ go down to a biholomorphism between $\Mrho$ and $\M_\eta$. In fact,
    \[
        \Theta(\gamma \underset{\rho}{\bullet} x)=\theta(\gamma)\underset{\eta}{\bullet} \Theta(x),\ \forall \gamma \in \Gamma
    \]
    Now, let $\psi=\widetilde{\phi}\circ \Theta^{-1}$, we get:
    \[
        \psi(\gamma \underset{\eta}{\bullet} x)= \gamma\underset{\rho}{\bullet}\psi(x),\ \forall \gamma \in \Gamma
    \]
    E. Ghys has proved that such biholomorphism has to be a left translation by some element $g$ of $\SL$ such that $\eta$ and $\rho$ are conjugate by $g$. 

    As $\Theta=\iota_\delta$ and $\psi=\textit{L}_h$, for some $\delta$ and $h$ in $\SL$, we have $\widetilde{\phi}(x)=\psi \circ \Theta (x)=h \delta x \delta^{-1}$. Back to the equivariance condition \eqref{GammaEquivariance}, we get successively
    \begin{align*}
        \widetilde{\phi}(\gamma \underset{\rho}{\bullet}x)=&\big{(}\epsilon(\gamma)\iota_\delta(\gamma)\big {)}\underset{\rho}{\bullet} \widetilde{\phi}(x)\\
        h\delta\big{(}\rho(\gamma)^{-1}x\gamma\big{)}\delta^{-1}=&\rho(\epsilon(\gamma)\iota_\delta(\gamma))^{-1}\big{(}h\delta x\delta^{-1}\big{)}\epsilon(\gamma)\delta\gamma\delta^{-1}
    \end{align*}
    Which simplify in
    \begin{align}\label{condition}
        \rho(\epsilon(\gamma)).\rho(\iota_\delta(\gamma))=\epsilon(\gamma).\iota_{g}(\rho(\gamma)),\ \forall \gamma \in \Gamma        
    \end{align}
    where $g=h\delta$.
\end{proof}

Denote by $G_\rho$ the set of pairs $(g,\delta) \in \SL \times \SL$ for which $x\mapsto L_g\circ R_\delta (x)$ descends to an automorphism of $\Mrho$, i.e. pairs $(g,\delta)$ which satisfies \eqref{condition} for some $\epsilon \in \Hom(\Gamma,\{ \pm \Id \})$. As $L_g\circ R_\delta=L_{-g}\circ R_{-\delta}$, we will consider the quotient $PG_\rho \coloneqq G_\rho/\{\pm \Id\}$

\begin{lemma}\label{automorphism}
    Let $\rho\in \Rep^a$ then, we have a surjective morphism of group
    \[
        PG_\rho \to \Aut(\Mrho)
    \]
    with kernel given by Deck transformations, i.e. isomorphic to $\Gamma$.
\end{lemma}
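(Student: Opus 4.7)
The plan is to extract surjectivity directly from Lemma~\ref{LiftAutomorphisms} and to compute the kernel via a connectedness argument on the universal cover $\SL$.

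For surjectivity I would simply observe that, given $\phi \in \Aut(\Mrho)$, Lemma~\ref{LiftAutomorphisms} hands us a lift of the form $\widetilde{\phi} = L_g \circ R_\delta$, so $(g,\delta) \in G_\rho$ by definition and descends to $\phi$. The map from $PG_\rho$ is well-defined because $L_{-g} \circ R_{-\delta} = L_g \circ R_\delta$, and it is a group homomorphism in view of the identity $(L_{g_1} \circ R_{\delta_1}) \circ (L_{g_2} \circ R_{\delta_2}) = L_{g_1 g_2} \circ R_{\delta_2 \delta_1}$.

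The substantive work is the kernel computation. If $(g,\delta) \in G_\rho$ descends to $\Id_{\Mrho}$, then the map $\Phi : x \mapsto g x \delta$ sends every $x \in \SL$ into its own $\Gamma$-orbit, so $x \mapsto (x, \Phi(x))$ is a continuous section of the fibered product $\SL \times_{\Mrho} \SL$. Because admissibility of $\rho$ makes the quotient $\SL \to \Mrho$ a regular covering with deck group $\Gamma$, this fibered product decomposes as a disjoint union $\bigsqcup_{\gamma \in \Gamma}\{(x, \rho(\gamma)^{-1} x \gamma) : x \in \SL\}$, and connectedness of $\SL$ forces the section to land in a single component. This produces a fixed $\gamma \in \Gamma$ with $g x \delta = \rho(\gamma)^{-1} x \gamma$ for all $x \in \SL$.

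Rewriting this as $\rho(\gamma) g \cdot x = x \cdot \gamma \delta^{-1}$ and specializing at $x = \Id$ shows that $\rho(\gamma) g = \gamma \delta^{-1}$ commutes with every element of $\SL$, hence lies in $Z(\SL) = \{\pm \Id\}$. So $(g,\delta) = \pm(\rho(\gamma)^{-1},\gamma)$, and both signs collapse to the same class in $PG_\rho$. The assignment $\gamma \mapsto [(\rho(\gamma)^{-1}, \gamma)]$ is the deck-group embedding into $PG_\rho$; it is injective because $[(\rho(\gamma)^{-1}, \gamma)] = [\Id]$ would force $\gamma = \pm \Id$, with $\gamma = -\Id$ excluded by torsion-freeness of $\Gamma$. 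The only non-routine ingredient is the fibered-product step, which turns the pointwise existence of $\gamma(x)$ into global constancy; everything else is bookkeeping.
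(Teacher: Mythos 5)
Your proposal is correct and follows essentially the same route as the paper: surjectivity is read off from Lemma~\ref{LiftAutomorphisms}, and the kernel is identified with the deck transformations of $\SL\to\Mrho$, which form a copy of $\Gamma$. The only differences are that you prove by hand (via the fibered-product decomposition and connectedness of $\SL$) the covering-space fact the paper simply invokes through simple connectedness, and that you make explicit the central ambiguity, obtaining $(g,\delta)=\pm(\rho(\gamma)^{-1},\gamma)$ in $G_\rho$ before passing to $PG_\rho$, a sign the paper's phrase ``$(g,\delta)=(\rho(\gamma)^{-1},\gamma)$'' glosses over.
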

    
\begin{proof}
    The morphism $PG_\rho \ni (g,\delta) \mapsto \phi\in \Aut(\Mrho)$ such that $\widetilde{\phi}=L_g\circ R_\delta$ is surjective by definition of $PG_\rho$ and by previous lemma. As $\SL$ is simply connected, $\phi$ is the identity in $\Aut(\Mrho)$ if, and only if, $\widetilde{\phi}$ is a Deck transformation. That is, $\widetilde{\phi}(x)=\gamma \underset{\rho}{\bullet}x$ for some $\gamma\in \Gamma$, or equivalently $(g,\delta)=(\rho(\gamma)^{-1},\gamma)$.
\end{proof}

\begin{lemma}\label{aut0}
    Let $\rho\in \Rep^a$, then the connected component of the automorphism group of $\Mrho$ is the projection on $\PSL$ of the centralizer $C_{\SL}(\rho(\Gamma))$ of $\rho(\Gamma)$ in $\SL$.
\end{lemma}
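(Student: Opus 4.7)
The plan is to use Lemma \ref{automorphism}, which realizes $\Aut(\M_\rho)$ as the quotient $PG_\rho/\Gamma$, and to identify $\Aut^0(\M_\rho)$ by describing continuous paths in $G_\rho$ starting at the identity. Because the projection $G_\rho \twoheadrightarrow \Aut(\M_\rho)$ has discrete fibers, any path in $\Aut^0(\M_\rho)$ issued from the identity lifts to a continuous path $(g_t, \delta_t)_{t \in [0,1]}$ in $G_\rho$ with $(g_0, \delta_0) = (\Id, \Id)$; the task reduces to pinning down which pairs can appear.

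The first move is to rigidify $\delta_t$. For each $t$, condition \eqref{condition} furnishes some $\epsilon_t \in \Hom(\Gamma, \{\pm\Id\})$ such that $\theta_t(\gamma) := \epsilon_t(\gamma)\iota_{\delta_t}(\gamma) \in \Gamma$; equivalently, $\iota_{\delta_t}(\gamma) \in \pm\Gamma$ for every $\gamma \in \Gamma$. The map $t \mapsto \iota_{\delta_t}(\gamma)$ is continuous, lands in the discrete set $\pm\Gamma$, and equals $\gamma$ at $t = 0$, so is constantly $\gamma$. Hence $\delta_t \in C_{\SL}(\Gamma)$, and by the Borel density theorem $\Gamma$ is Zariski dense in $\SL$, which forces $C_{\SL}(\Gamma) = Z(\SL) = \{\pm\Id\}$. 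Continuity together with $\delta_0 = \Id$ then gives $\delta_t = \Id$ throughout.

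With $\delta = \Id$, condition \eqref{condition} collapses to $\iota_g(\rho(\gamma)) = \rho(\gamma)$ for every $\gamma \in \Gamma$, i.e.\ $g \in C_{\SL}(\rho(\Gamma))$. Conversely, any $g \in C_{\SL}(\rho(\Gamma))^0$ defines a pair $(g, \Id) \in G_\rho$ reachable from $(\Id, \Id)$ by a path inside $C_{\SL}(\rho(\Gamma))^0 \times \{\Id\}$, hence yields an element of $\Aut^0(\M_\rho)$. The deck-transformation kernel $\{(\rho(\gamma)^{-1}, \gamma) : \gamma \in \Gamma\}$ meets $C_{\SL}(\rho(\Gamma))^0 \times \{\Id\}$ only at $\gamma = \Id$ (as $\Gamma$ is torsion-free), while the diagonal $\{\pm\Id\}$-identification in $PG_\rho$ contributes the remaining central ambiguity. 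Projecting to $\PSL$ absorbs this latter ambiguity and identifies $\Aut^0(\M_\rho)$ with the image of $C_{\SL}(\rho(\Gamma))$ in $\PSL$.

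The chief obstacle is the rigidity step on $\delta_t$: once one observes that $\iota_{\delta_t}(\gamma)$ is a continuous map into the discrete target $\pm\Gamma$, constancy is automatic and Borel density closes the argument. Everything else is bookkeeping between the two discrete quotients by $\{\pm\Id\}$ and by $\Gamma$.
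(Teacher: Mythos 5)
Your argument is correct in substance and follows the same skeleton as the paper's proof: reduce to the group $PG_\rho$ of lemma~\ref{automorphism}, show that on the identity component the second coordinate $\delta$ is forced to be trivial, and then read off from \eqref{condition} applied to pairs $(g,\Id)$ that the first coordinate ranges over the centralizer $C_{\SL}(\rho(\Gamma))$. Where you genuinely differ is in the rigidity step for $\delta$: the paper invokes Mostow's theorem to get discreteness of $\Aut(\Gamma)$, hence of the second-factor projection of $PG_\rho$, so that the identity component lies over $\delta=\Id$; you instead lift a path from $\Aut^0(\Mrho)$ to $G_\rho$, observe that $t\mapsto \iota_{\delta_t}(\gamma)$ is a continuous map into the discrete set $\pm\Gamma$, hence constant, and then eliminate $\delta_t$ via Borel density ($C_{\SL}(\Gamma)=\{\pm\Id\}$). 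This is more elementary and self-contained (no appeal to Mostow rigidity), at the price of the lifting step: discrete fibers alone do not give path lifting, so you should add that $G_\rho\to\Aut(\Mrho)$ is a continuous surjective homomorphism of Lie groups with discrete kernel, hence open and a covering onto its image, and that $\Aut^0(\Mrho)$ is path-connected because $\Aut(\Mrho)$ is a Lie group. Your intersection of the deck kernel with $C_{\SL}(\rho(\Gamma))^0\times\{\Id\}$ plays the same role as the paper's explicit description of the $\Gamma$-action $(\gamma,(g,\delta))\mapsto(g\rho(\gamma),\gamma^{-1}\delta)$ on $PG_\rho$.

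One caveat about your last sentence, which you share with the paper's own final line. What your path argument actually delivers is $\Aut^0(\Mrho)=\{[L_g]\ \vert\ g\in C_{\SL}(\rho(\Gamma))^0\}$. Passing from $C_{\SL}(\rho(\Gamma))^0$ to the full centralizer needs the image of $C_{\SL}(\rho(\Gamma))$ in $\PSL$ to be connected; this is not automatic from connectedness of the centralizer itself (it can be disconnected, e.g.\ $\pm U$ with $U$ the unipotent upper-triangular subgroup when $\rho$ has nontrivial unipotent abelian image, although its $\PSL$-image is then still connected). Moreover, since $-\Id\notin\Gamma$, the translations $L_g$ and $L_{-g}$ descend to \emph{distinct} automorphisms of $\Mrho$: the identification defining $PG_\rho$ is by the diagonal element $\pm(\Id,\Id)$, not by a sign in the first coordinate alone, so projecting to $\PSL$ does not literally ``absorb'' a central ambiguity; the clean output of your argument is $\Aut^0(\Mrho)\simeq C_{\SL}(\rho(\Gamma))^0$ acting by left translations. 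This matches the level of precision of the paper's own bookkeeping (its claimed injection $PG_\rho^0\to\PSL\times\{\Id\}$ and the assertion that the centralizer is always connected), so it is not a gap relative to the paper, but the final identification deserves an explicit sentence.
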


\begin{proof}
    By Mostow's theorem, $\Aut(\Gamma)$ is discrete and so is the projection on the second factor of $PG_\rho$. Hence, we get a injection $PG_\rho^0 \to \PSL \times \{\Id\}$. 
    It is straighforward to check that the action of $\Gamma$ on $PG_\rho$ induced by composition of automorphism is given by
    \[
        \Gamma\times PG_\rho \to PG_\rho,\qquad (\gamma,(g,\delta)) \mapsto (g\rho(\gamma),\gamma^{-1}\delta)
    \]
    thus, there is no element of $\Gamma\setminus \{\Id \}$ fixing the connected component of $PG_\rho$. Moreover, the condition \eqref{condition} apply to $(g,\Id)$ is equivalent to require $g$ to be in centralizer of $\rho(\Gamma)$.
    
    Finally, by previous lemma we have 
    \[
        \Aut^0(\Mrho)\simeq \left( PG_\rho / \Gamma\right)^0\simeq PG_\rho^0 \simeq \left(C_{\SL}(\rho(\Gamma))/\{ \pm \Id\}\right)^0
    \]
    And one can check that the centralizer $C_{\SL}(\rho(\Gamma))$ is always connected.
\end{proof}

As in \cite{Meersseman2}, we denote by $\Aut^1(\Mrho)$ the group of automorphisms isotopic to identity through $\mathcal{C}^\infty$-diffeomorphisms (eventually not through biholomorphisms), that is $\Aut^1(\Mrho)=\Aut(\Mrho) \cap \Diff^{\, 0}(\Mrho)$. This group will be used in the next section as it is the isotropy group of a point in the Teichmüller stack. Note that there exists examples of manifolds $X$ for which $\Aut^0(X)\neq \Aut^1(X)$, see \cite{Meersseman2}.

\begin{proposition}\label{aut1}
    Let $\rho \in \Rep^a$, then $\Aut^1(\Mrho) = \Aut^0(\Mrho)$.
\end{proposition}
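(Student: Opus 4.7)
The inclusion $\Aut^0(\Mrho)\subseteq\Aut^1(\Mrho)$ is automatic, so the plan is to establish the reverse inclusion: given $\phi\in\Aut^1(\Mrho)$, I want to show that its class in $\Aut(\Mrho)\simeq PG_\rho/\Gamma$ lies in $C_{\SL}(\rho(\Gamma))/\{\pm\Id\}=\Aut^0(\Mrho)$, the subgroup identified by Lemma~\ref{aut0}. The ingredients will be path-lifting of a smooth isotopy, discreteness of $\Gamma$ inside $\SL$, and the algebraic content of Lemmas~\ref{LiftAutomorphisms} and~\ref{aut0}.

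Concretely, I would fix a smooth isotopy $(\phi_t)_{t\in[0,1]}\subset\Diff^0(\Mrho)$ with $\phi_0=\Id$ and $\phi_1=\phi$, lift it continuously to a family $\widetilde\phi_t\colon\SL\to\SL$ of diffeomorphisms with $\widetilde\phi_0=\Id$ (using that $\SL$ is the universal cover of $\Mrho$), and attach to each $\widetilde\phi_t$ its equivariance automorphism $\theta_t\in\Aut(\Gamma)$ via~\eqref{GammaEquivariance}. For each fixed $\gamma\in\Gamma$ and any chosen base point $x_0\in\SL$, the element $\theta_t(\gamma)\in\Gamma\subset\SL$ is the unique one satisfying $\theta_t(\gamma)\underset{\rho}{\bullet}\widetilde\phi_t(x_0)=\widetilde\phi_t(\gamma\underset{\rho}{\bullet}x_0)$; since $\Gamma$ is discrete in $\SL$ and the right-hand side depends continuously on $t$, this element is locally constant in $t$, so $\theta=\theta_1=\theta_0=\Id$.

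Next I would apply Lemma~\ref{LiftAutomorphisms} to $\phi=\phi_1$, writing $\widetilde\phi=L_g\circ R_\delta$ with the decomposition $\theta=\epsilon\cdot\iota_\delta|_\Gamma$. Combined with $\theta=\Id$, this says that $\delta\gamma\delta^{-1}=\pm\gamma$ for every $\gamma\in\Gamma$, so the projection $\overline\delta$ centralises $\overline\Gamma$ inside $\PSL$. As $\overline\Gamma$ is Zariski-dense and $\PSL$ has trivial center, $\delta\in\{\pm\Id\}$, and the class of $\phi$ in $PG_\rho$ is represented by a pair of the form $(g,\Id)$. Plugging $\delta=\Id$ into the relation~\eqref{condition} then forces $g\in C_{\SL}(\rho(\Gamma))$, which places $\phi$ in $\Aut^0(\Mrho)$ by Lemma~\ref{aut0}.

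The main delicate point I foresee is the first step: showing that a smooth-only (not biholomorphic) isotopy lifts continuously to $\SL$ while keeping track of the $\Gamma$-equivariance automorphism $\theta_t$. This is classical covering space theory, but it is precisely where the hypothesis $\phi\in\Diff^0(\Mrho)$ enters; after that, the whole argument is an algebraic reassembly of Lemmas~\ref{LiftAutomorphisms} and~\ref{aut0}.
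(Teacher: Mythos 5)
Your proof is correct and follows essentially the same route as the paper: lift the isotopy, use discreteness/continuity to see that the equivariance automorphism $\theta_t$ is constant hence trivial, deduce that $\delta$ is central (so trivial in $PG_\rho$), and conclude $g\in C_{\SL}(\rho(\Gamma))$ via Lemma~\ref{aut0}. Your pointwise discreteness argument for the constancy of $\theta_t$ and the Zariski-density step forcing $\delta\in\{\pm\Id\}$ merely spell out details the paper's proof leaves terse.
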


\begin{proof}
    Let $\phi\in \Aut^1(\Mrho)$ then by lemma \ref{LiftAutomorphisms}, there exists $g$ and $\delta$ in $\SL$ such that $\widetilde{\phi}=L_g\circ R_\delta$. Suppose we have an isotopy 
    \[
        \Phi:\SL \times [0,1] \to \SL
    \]
    with $\Phi(-,t)\in \Diff(\Mrho),\ \Phi(-,1)=\phi$ and $\Phi(-,0)$ is the identity. Obviously, $\Phi_t\coloneqq \Phi(-,t)$ have to preserve fibers (and also its inverse) so that there exists for each $t\in [0,1]$ a corresponding automorphism $\theta_t$ of $\Gamma\simeq \pi_1(\Mrho)$ such that the fibers-preserving condition is
    \[
        \Phi_t(\gamma \underset{\rho}{\bullet}x)=\theta_t(\gamma)\underset{\rho}{\bullet}\Phi_t(x),\ \forall t\in [0,1],\forall \gamma\in \Gamma
    \]  
    By discretness of $\Aut(\Gamma)$, general continuity argument shows that $\theta_t$ is constant and by assumption on $\Phi_0$, it is the identity. Hence, with the same notations as in lemma \ref{LiftAutomorphisms}, the lifted continuous automorphisms $\Theta$ (such that $\theta=\epsilon\Theta$) is also the identity and $\Theta=\iota_\delta=\Id$. We conclude that $\delta=\Id$ and the fiber-preserving condition applied on $g$ gives the same constraint on it that $\Aut^0(\Mrho)$ does.
\end{proof}

\section{representation variety.}\,

As $\Gamma$ arises as a fundamental group of an hyperbolic compact manifold it is finitely presented. Let 
\[
    \langle \gamma_1,\cdots, \gamma_n\vert\, R_1,\cdots,R_m \rangle
\]
be a presentation of $\Gamma$. Thus, we define the representation variety 
\[
    \Rep \coloneqq \left\{ (g_1,\cdots,g_n)\in \SL^n\vert\, R_i(g_1,\cdots,g_n)=\Id,\ \forall 1\leq i \leq m \right\}
\]
This set has a structure of algebraic variety (since $\SL$ is algebraic) and carries two topologies, the Zariski topology and the classical one. Note that, up to isomorphism, the presentation does not change the structure of algebraic variety of $\Rep$.

Let review the Weyl's constuction. Let $\rho_t$ be a smooth path of representation with extremity $\rho$ in $\Rep$. By setting
\[
    c(\gamma) \coloneqq \left.\frac{d\, \rho_t(\gamma) }{dt}\right\vert_{t=0}\rho(\gamma)^{-1}
\]
we obtain a cocycle $c\in Z^1(\Gamma,\Slrho)$, where $\Slrho$ stands for the Lie algebra $\Sl$ with the structure of $\Gamma$-module given by the adjoint representation composed by $\rho$. If $\rho_t$ is given by the conjugaison of $\rho$ by a path of matrices $A_t$ emanating from $\Id$ the corresponding cocycle is a coboundary, i.e. given by $\gamma\mapsto X-\Ad_{\rho(\gamma)}X$ with $X=\left.\frac{d\, A_t }{dt}\right\vert_{t=0}$. This construction leads to:\\
\textit{We have the following isomorphism \cite[Proposition\, 2.2]{LubotzkyMagid} 
\[
    T_\rho^{Zar} \Rep \simeq Z^1(\Gamma,\Slrho)
\]
and the inclusion $\mathcal{I}\subset\sqrt{\mathcal{I}}$ induces an injection 
\[
    T_\rho^{Zar} \Rep_{red} \hookrightarrow Z^1(\Gamma,\Slrho)
\]} 
where $\Rep_{red}$ is the reduction of the affine scheme $\Rep$ and $\mathcal{I}$ is the ideal defining the variety $\Rep$. This inclusion can be strict, see \cite[Example\, 2.18]{Heusener}. As we are interested in compute Kuranishi spaces, which can be non-reduced, it is very important to deal with the scheme $\Rep$ and not its reduction.

\begin{remark}
    Actually, Kapovich and Millson \cite{MillsonKapovich} proved that there are no “local” restrictions on geometry of the $\SL$-representation schemes of 3-manifold groups. 
\end{remark}

\section{Teichmüller stack}

\subsection{Preliminary results}

Let $\rho\in \Rep$ be admissible. The tangent bundle of $\Mrho$ is identified with the adjoint bundle associated to the $\SL$-principal bundle $\pi_\rho:\SL \to \Mrho$, the universal cover, that is $T\Mrho \simeq \Ad_\rho(\SL) \coloneqq \SL \times_{Ad_\rho}\Sl$ where the action is given by
\begin{align}
    \Gamma \times \SL\times \Sl \longrightarrow \ & \SL\times \Sl \nonumber\\ 
    \left(\gamma,(x,v)\right) \longmapsto \ & \left(\rho(\gamma)^{-1}x\gamma, \Ad_{\rho(\gamma)^{-1}}(v)\right) \label{action}
\end{align}
Consider the sheaf $\Theta_\rho$ given by germs of its holomorphic sections. Remark that holomorphic sections of this bundle corresponds to holomorphic vector fields on $\Mrho$, it follows that $\Theta_\rho$ is exactly the sheaf of germs of holomorphic vector fields on $\Mrho$. 

It is well known that this tangent bundle, as it is constructed by a representation of the fundamental group, carries a flat connection (see for example \cite{GoldmanMillsonKahler}). We also denote by $\Frho$ the sheaf of germs of its flat sections. The interest of these sheaves is that $H^1(\Mrho,\Theta_\rho)$ is identified to the Zariski tangent of the Kuranishi space at the base point and the elements of $H^1(\Mrho,\Frho)$ corresponds to infinitesimal deformations of the $\GX$-structure of $\Mrho$.

\begin{proposition}\label{embedding}
    Let $\rho$ be an admissible representation. Then, the embedding of $\Frho$ in $\Theta_\rho$ induces an isomorphism 
    \[ 
        H^1(\Mrho,\Frho) \simeq H^1(\Mrho,\Theta_\rho)\label{isoH1}
    \]
    and an injection
    \[
        H^2(\Mrho,\Frho) \hookrightarrow H^2(\Mrho,\Theta_\rho)\label{plongementH2}
    \]
\end{proposition}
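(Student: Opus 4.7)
The plan is to exploit the flat holomorphic connection $\nabla$ on $T\Mrho$ to resolve $\Frho$ by twists of $\Theta_\rho$, and then to extract both statements from the resulting long exact sequence. First I would establish the holomorphic de Rham resolution
\[
0 \to \Frho \hookrightarrow \Theta_\rho \xrightarrow{\nabla} \Omega^1_{\Mrho} \otimes \Theta_\rho \xrightarrow{\nabla} \Omega^2_{\Mrho} \otimes \Theta_\rho \xrightarrow{\nabla} \Omega^3_{\Mrho} \otimes \Theta_\rho \to 0.
\]
Exactness at the terms to the right of $\Frho$ is a local statement, so one may argue in a flat holomorphic frame of $T\Mrho$ (which exists locally because $\nabla$ is flat), in which the complex reduces to the classical holomorphic Poincaré lemma.

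Write $\mathcal{K} := \mathrm{Im}(\nabla \colon \Theta_\rho \to \Omega^1 \otimes \Theta_\rho) \simeq \Theta_\rho/\Frho$. The short exact sequence $0 \to \Frho \to \Theta_\rho \to \mathcal{K} \to 0$ produces
\[
\cdots \to H^0(\Theta_\rho) \to H^0(\mathcal{K}) \to H^1(\Frho) \to H^1(\Theta_\rho) \to H^1(\mathcal{K}) \to H^2(\Frho) \to H^2(\Theta_\rho) \to \cdots,
\]
so the proposition is equivalent to the two statements (a) $H^0(\Theta_\rho) \to H^0(\mathcal{K})$ is surjective, and (b) $H^1(\Mrho, \mathcal{K}) = 0$. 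Using the remaining pieces of the resolution one further breaks $\mathcal{K}$ via the short exact sequences $0 \to \mathcal{K} \to \Omega^1 \otimes \Theta_\rho \to \mathcal{K}' \to 0$ and $0 \to \mathcal{K}' \to \Omega^2 \otimes \Theta_\rho \to \Omega^3 \otimes \Theta_\rho \to 0$, reducing (a) and (b) to the computation of the cohomology of the bundles $\Omega^p \otimes \Theta_\rho$.

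The main obstacle lies in this final vanishing step. Here I would use the $\Ad$-invariance of the Killing form on $\Sl$: it supplies flat isomorphisms $\Omega^1_{\Mrho} \simeq \Theta_\rho$ and $\Omega^3_{\Mrho} \simeq \mathcal{O}_\Mrho$ (in particular $\Mrho$ has trivial canonical bundle), and it decomposes $\Omega^p \otimes \Theta_\rho$ into flat bundles associated to the $\SL$-irreducible summands of $\Sl \otimes \Sl$, namely the $5$-dimensional and $3$-dimensional pieces. Combining Serre duality (made cleaner by the triviality of the canonical bundle) with a Hodge-type decomposition adapted to the bi-invariant holomorphic metric of constant negative curvature on $\Mrho$ (in the spirit of the computations in \cite{Ghys}), together with the admissibility of $\rho$ to guarantee compactness, one expects to obtain the required vanishings for every admissible $\rho$.
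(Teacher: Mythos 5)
Your reduction is fine as far as it goes: the short exact sequence $0\to\Frho\to\Theta_\rho\to\mathcal{K}\to 0$ and its long exact sequence do reduce the proposition to your conditions (a) and (b), and this parallels the paper (which uses the same quotient, seen as the $\Gamma$-module $\Xi_\rho=\Hrho/\Slrho$ after pushing everything to group cohomology). The genuine gap is the last step, which is exactly where all the content lies and which you leave as "one expects to obtain the required vanishings". In fact the vanishings you would need cannot hold bundle by bundle: via the Killing form, $\Omega^1_{\Mrho}\otimes\Theta_\rho\simeq\Theta_\rho\otimes\Theta_\rho$, and $\Sl\otimes\Sl$ decomposes as $\mathbf{1}\oplus\mathbf{3}\oplus\mathbf{5}$ (you omit the trivial summand, which is precisely the troublesome one). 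The trivial summand contributes a flat factor $\mathcal{O}_{\Mrho}$, and $H^1(\Mrho,\mathcal{O}_{\Mrho})$ contains $H^1(\Gamma,\C)$, which is nonzero whenever $b_1(\Gamma)\neq 0$ — a case the paper explicitly insists on (admissible, non-rigid examples); likewise $H^0(\Mrho,\Omega^1\otimes\Theta_\rho)\supseteq\C\cdot\mathrm{Id}\neq 0$. So (a) and (b) cannot follow from cohomology vanishing of the terms $\Omega^p\otimes\Theta_\rho$; one must control the maps induced by $\nabla$ (the $d_1$ of the hypercohomology spectral sequence), which is the original problem in disguise. Moreover, the tool you invoke to finish — a "Hodge-type decomposition adapted to the bi-invariant holomorphic metric" — does not exist: a holomorphic metric is not a Hermitian metric, it carries no harmonic theory, and $\Mrho$ is not Kähler; Serre duality with trivial canonical bundle only permutes the groups, it does not kill them. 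Ghys's computations, which you cite in this spirit, are not Hodge-theoretic but representation-theoretic (decomposition of $\mathcal{O}(\SL)$ under $\SL\times\SL$ and vanishing theorems in group cohomology).

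For comparison, the paper's route avoids these bundles altogether: since $\pi:\SL\to\Mrho$ has Stein total space and both sheaves are pulled back from bundles, Cartan's Theorem B and Mumford's comparison identify $H^i(\Mrho,\Frho)\simeq H^i(\Gamma,\Slrho)$ and $H^i(\Mrho,\Theta_\rho)\simeq H^i(\Gamma,\Hrho)$, where $\Hrho$ is the space of global holomorphic maps $\SL\to\Sl$. The maps $H^i(\Gamma,\Slrho)\to H^i(\Gamma,\Hrho)$ are injective, so everything reduces to $H^1(\Gamma,\Xi_\rho)=0$ with $\Xi_\rho=\Hrho/\Slrho$; this vanishing is imported from Ghys's Theorem 4.1 (the case of the trivial representation) and transported to an arbitrary admissible $\rho$ by inflation–restriction along $1\to\ker(\rho)\to\Gamma\to\rho(\Gamma)\to 1$, using Ghys's Lemma 5.6 that $\ker(\rho)$ is Zariski dense in $\SL$ (so $\Xi_\rho^{\ker\rho}=0$). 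Your proposal contains no substitute for this key input, so as written it is a plausible-looking framework rather than a proof.
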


We will show the successive maps:
\[
    H^i(\Mrho,\Frho)\simeq H^i(\Gamma,\Slrho) \hookrightarrow H^i(\Gamma,\Hrho)\simeq H^i(\Mrho,\Theta_\rho),\ i\geq 0
\]
where $\Hrho$ is the set of global holomorphic functions with values in $\Slrho$. Here, $\Slrho$ stands for $\Sl$ endowed with the structure of $\Gamma$-module induced by \eqref{DefinitiondesMrho}, i.e. given by $\Ad\circ \rho$. Then we will prove that the embedding is actually an isomorphism for $i=0$ and $1$.

\begin{lemma}\label{Cechtogroupcohomology}
    Let $\rho\in \Rep^a$, then
    \[
        H^i(\Mrho,\Frho)\simeq H^i(\Gamma,\Slrho),\hookrightarrow H^i(\Gamma,\Hrho) \simeq H^i(\Mrho,\Theta_\rho),\ i\geq 0
    \]
\end{lemma}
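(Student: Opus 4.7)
The plan is to push the computations through the universal covering $\pi_\rho\colon\SL\to\Mrho$, which is a Galois $\Gamma$-cover, and to run the Cartan--Leray spectral sequence
\[
E_2^{p,q}=H^p\bigl(\Gamma,H^q(\SL,\pi_\rho^{*}\mathcal S)\bigr)\Longrightarrow H^{p+q}(\Mrho,\mathcal S)
\]
separately for $\mathcal S=\Theta_\rho$ and $\mathcal S=\Frho$, and then to identify the induced arrow $H^i(\Mrho,\Frho)\to H^i(\Mrho,\Theta_\rho)$ coming from the sheaf inclusion $\Frho\hookrightarrow\Theta_\rho$ with the inclusion of constants $\Slrho\hookrightarrow\Hrho$ at the group-cohomology level.

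For the right-hand isomorphism $H^i(\Mrho,\Theta_\rho)\simeq H^i(\Gamma,\Hrho)$, the pullback $\pi_\rho^{*}\Theta_\rho$ is coherent on $\SL$. Since $\SL$ is affine hence Stein, Cartan's theorem~B gives $H^q(\SL,\pi_\rho^{*}\Theta_\rho)=0$ for $q\ge 1$, while $H^0$ is by construction $\Hrho$ equipped with the $\Gamma$-action induced by \eqref{action}. The spectral sequence degenerates at $E_2$ on the $q=0$ row and yields the claim.

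For the left-hand isomorphism $H^i(\Mrho,\Frho)\simeq H^i(\Gamma,\Slrho)$, the sheaf $\Frho$ is a local system with monodromy $\Ad\circ\rho$, so its pullback to the simply connected $\SL$ is the constant sheaf $\underline{\Sl}$. Its sheaf cohomology on $\SL$ is the singular cohomology of $\SL$ with constant coefficients $\Sl$; via the Cartan decomposition $\SL\simeq\SU\times\R^3\simeq S^3\times\R^3$, this cohomology is $\Sl$ in degrees $0$ and $3$ and vanishes otherwise. The spectral sequence is thus concentrated in the rows $q=0$ and $q=3$; in the low-degree range relevant for Proposition~\ref{embedding} only the $q=0$ row contributes to the abutment, producing the stated iso.

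Finally, the central injection $H^i(\Gamma,\Slrho)\hookrightarrow H^i(\Gamma,\Hrho)$ is the image, under the two isomorphisms above, of the map of sheaf cohomology induced by $\Frho\hookrightarrow\Theta_\rho$. I plan to establish injectivity via the long exact sequence attached to $0\to\Frho\to\Theta_\rho\to\mathcal Q\to 0$ (equivalently $0\to\Slrho\to\Hrho\to Q\to 0$): $\mathcal Q$ is still coherent, so the same Cartan~B argument yields $H^i(\Mrho,\mathcal Q)\simeq H^i(\Gamma,Q)$ and a morphism between the two long exact sequences. Injectivity in degree~$0$ is already clear from Lemma~\ref{aut0} and Proposition~\ref{aut1}, which identify $\Slrho^{\Gamma}=\Hrho^{\Gamma}=\operatorname{Lie}\Aut^0(\Mrho)$. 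The main obstacle will be controlling the connecting maps $H^{i-1}(\Gamma,Q)\to H^i(\Gamma,\Slrho)$ in positive degrees — equivalently, showing that a $\Gamma$-cocycle valued in $\Slrho$ which becomes a coboundary in $\Hrho$ is already a coboundary of a constant cochain — and I expect to handle this by diagram-chasing between the two spectral sequences, using that their $q=0$ columns match verbatim under the inclusion of constants.
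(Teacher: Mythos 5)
The genuine gap is the middle arrow. The injection $H^i(\Gamma,\Slrho)\hookrightarrow H^i(\Gamma,\Hrho)$ is precisely what the proof of Proposition~\ref{embedding} consumes (injectivity of $f^1,f^2$ is what forces the connecting maps $\delta_0,\delta_1$ to vanish), and your proposal does not establish it: you defer it to a ``diagram chase between the two spectral sequences'', but matching $q=0$ rows only defines the map, and controlling the connecting homomorphisms of $0\to\Slrho\to\Hrho\to Q\to 0$ is literally equivalent to the injectivity you are after, so as it stands the plan restates the problem rather than solving it. Your auxiliary claim that $\mathcal{Q}=\Theta_\rho/\Frho$ is coherent is also false: the inclusion $\Frho\hookrightarrow\Theta_\rho$ is only $\C$-linear, not $\mathcal{O}$-linear, so the quotient is merely a sheaf of $\C$-vector spaces and Cartan~B does not apply to it. The paper's route here is different and much shorter: the inclusion of the constants $\Slrho$ into $\Hrho$ admits a $\Gamma$-equivariant retraction (it is split $\SL$-equivariantly, via the isotypic decomposition of holomorphic functions on $\SL$), and a split injection of $\Gamma$-modules induces injections on all $H^i(\Gamma,-)$; no long exact sequence or comparison of connecting maps is needed.

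For the two outer isomorphisms your argument essentially coincides with the paper's for $\Theta_\rho$ (Cartan--Leray for the Galois cover, Mumford's appendix in the paper, plus Cartan~B on the Stein manifold $\SL$). For $\Frho$ you are in fact more careful than the paper: the pullback is the constant sheaf $\underline{\Sl}$ on $\SL\simeq S^3\times\R^3$, whose $H^3$ does not vanish, so the spectral sequence has a $q=3$ row and your argument only yields $H^i(\Mrho,\Frho)\simeq H^i(\Gamma,\Slrho)$ for $i\le 2$. That suffices for Proposition~\ref{embedding}, but it is not the statement ``for all $i\ge 0$'' as written; note that the surviving term $E_\infty^{0,3}\simeq\Sl^{\rho(\Gamma)}$ (it survives because $H^4(\Gamma,\Slrho)=0$) shows the unrestricted statement is genuinely problematic in degree $3$, so you should state explicitly the degree range you prove. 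In short: keep your treatment of the outer isomorphisms (with the degree caveat made explicit), but replace the proposed long-exact-sequence/diagram-chase step by a splitting argument for $\Slrho\subset\Hrho$, or some other genuine proof of the central injectivity.
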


\begin{proof}
    The way to go from \v Cech coholomogy to group cohomology is given by a well known result in \cite[Appendix to §2,\, p.22]{Mumford}.
    Consider the case $\pi_\rho:\SL\to \Mrho$ and $\mathcal{F}$ is $\Frho$ or $\Theta_\rho$. As both sheaves are obtained as sheaves of germs of sections of fiber bundles, the pullback sheaves are simply the corresponding sheaves of germs of sections of the pullback bundles:

    \begin{center}
        \begin{tikzcd}
            \SL\times\Sl \ar[r, "d\pi"] \ar[d, "p_1"] & \SL\times_{\Ad_\rho}\Sl \ar[d, "\pi_\rho"]\\
            \SL \ar[r, "\pi"] & \Mrho
        \end{tikzcd}
    \end{center}

    Therefore, the global holomorphic sections (\textit{resp.} flat sections) of the trivial bundle $\SL\times \Sl \to \SL$ is the set of holomorphic (\textit{resp.} constant) functions from $\SL$ to $\Sl$, which we denoted by $\Hrho$ (\textit{resp.} $\Slrho$).
    The $\Gamma$-structure of both sets is given by precomposition by the action of $\Gamma$ via $\underset{\rho}{\bullet}$ and postcomposition by adjoint representation of $\rho$, that is 
    \begin{align}\label{GammaStructureHrho}
        \Hrho \ni f \mapsto \left(\gamma.f:x\mapsto \Ad_{\rho(\gamma)^{-1}}f(\rho(\gamma^{-1})x\gamma) \right)    
    \end{align}
    The Cartan's theorem $B$ states that for any Stein manifold $X$ and any coherent sheaf $\mathcal{F}$, $H^p(X,\mathcal{F})$ vanish for $p\geq 1$. In our context, $\SL$ is a Stein manifold as it is isomorphic to the affine variety $ad-bc=1$ in $\C^4$ and the sheaves $\Theta_\rho$ and $\Frho$ are locally free. We finally end up with the isomorphisms
    \[
        H^i(\Gamma,\Slrho) \simeq H^i(\Mrho,\Frho), \quad H^i(\Gamma,\Hrho) \simeq H^i(\Mrho,\Theta_\rho),\ \forall i\in \N
    \]

    Finally, as the embedding of $\Slrho$ in $\Hrho$ is $\SL$-equivariant, by general arguments in group cohomology \cite{Brown}, the applications 
    \[
        H^i(\Gamma,\Slrho) \to H^i(\Gamma,\Hrho)
    \]
    are injective.
\end{proof}

\begin{proof}[Proof of proposition \ref{embedding}]
    Let as always $\rho$ be an admissible representation. Consider the short exact sequence of $\Gamma$-modules
    \[
        0\to \Slrho \to \Hrho \to \Xi_\rho \coloneqq \Hrho/(1\otimes\Slrho) \to 0
    \]
    and the following part of the long exact sequence associated to it
    \begin{center}
        \begin{tikzcd}
            H^0(\Gamma,\Xi_\rho) \ar[r, "\delta_0"] & H^1(\Gamma,\Slrho) \ar[r, "f^1"]& H^1(\Gamma,\Hrho) \hspace{0.5cm} \ar[dll, out=-30, in=150] \\
            \hspace{1cm} H^1(\Gamma,\Xi_\rho) \ar[r, "\delta_1"] & H^2(\Gamma,\Slrho) \ar[r, "f^2"] & H^2(\Gamma,\Hrho)  
        \end{tikzcd}    
    \end{center}
    By lemma \ref{Cechtogroupcohomology}, $f^1$ and $f^2$ are injective maps, so the coboundary maps $\delta_i$ are the zero maps. We end up with the short exact sequence
    \begin{align}\label{ExactsequenceConstantfunctions}
        0 \to H^1(\Gamma,\Slrho) \to H^1(\Gamma,\Hrho) \to H^1(\Gamma,\Xi_\rho) \to 0        
    \end{align}
    We will show that the last term always vanish.
    
    Consider the two $\Gamma$-modules $\Xi_\rho$ and $\Xi_{\rho_0}$ underlying the same abelian group (identify with global holomorphic functions from $\SL$ to $\Sl$ with vanishing constant term) but with $\Gamma$-module structures induced respectively by $\rho$ and $\rho_0$, the trivial morphism. From the short exact sequence of groups:
    \begin{align}
        1 \mapsto \Gamma_0 \coloneqq \ker(\rho) \longmapsto \Gamma \longmapsto \rho(\Gamma) \longmapsto 1 \label{exactsequence}
    \end{align}
    one can construct the two associated inflation-restriction exact sequences respectively with values in $\Xi_\rho$ and $\Xi_{\rho_0}$:
    \begin{center}
        \begin{tikzcd}
            0 \ar[]{r} &  H^1(\rho(\Gamma), \Xi_\bullet^{\Gamma_0}) \ar[]{r} & H^1(\Gamma, \Xi_\bullet)\hspace{0.5cm} \ar[out=-30, in=150, "res"']{dl} & \\
            & \hspace{1cm} H^1(\Gamma_0, \Xi_\bullet)^{\rho(\Gamma)} \ar[]{r}& H^2(\rho(\Gamma), \Xi_\bullet^{\Gamma_0}) \ar[]{r} & H^2(\Gamma,\Xi_\bullet)
        \end{tikzcd}    
    \end{center}
    for $\bullet=\rho$ or $\rho_0$.
    Obviously, the action of $\Gamma_0=\ker(\rho)$ on $\Xi_\bullet$ is the same for $\bullet=\rho$ or $\rho_0$, which is given by precomposition by right multiplication by $\gamma\in\Gamma_0$ (see \eqref{GammaStructureHrho}). Moreover, E. Ghys showed \cite[p.131-132]{Ghys} that a holomorphic function invariant by $\Gamma_0$ is also invariant by its Zariski closure, which is $\SL$ by \cite[Lemma\, 5.6]{Ghys}. Hence, these functions are constant and by definition of $\Xi_\bullet$, equal to zero. We end up with $\Xi_\rho^{\Gamma_0}=\Xi_{\rho_0}^{\Gamma_0}=0$ and it follows that $H^1(\rho(\Gamma),\Xi_\bullet^{\Gamma_0})=H^2(\rho(\Gamma),\Xi_\bullet^{\Gamma_0})=0$. In other words, the restriction map, which is the curvy arrow in the previous inflation-restriction exact sequence, is an isomorphism either for $\rho$ and $\rho_0$. 
    to summarize, we have the following isomorphisms
    \begin{align*}
        H^1(\Gamma_0,\Xi_\rho) \simeq & H^1(\Gamma_0,\Xi_{\rho_0})\\
        res:H^1(\Gamma,\Xi_\rho) \overset{\sim}{\longrightarrow} & H^1(\Gamma_0,\Xi_\rho)^{\rho(\Gamma)}\\
        res:H^1(\Gamma,\Xi_{\rho_0}) \overset{\sim}{\longrightarrow} & H^1(\Gamma_0,\Xi_{\rho_0})^{\rho(\Gamma)}=H^1(\Gamma_0,\Xi_{\rho_0})
    \end{align*}
    Theorem $4.1$ in \cite{Ghys} states, with our notations, that $H^1(\Gamma,\Xi_{\rho_0})=0$. With the previous isomorphisms and this result we have that $H^1(\Gamma,\Xi_\rho)=0$ as announced and \eqref{ExactsequenceConstantfunctions} gives the desired isomorphism. 
\end{proof}

\subsection{Higher obstructions}

We want to describe deformations of $\Mrho$ over $(\C,0)$. In order to do it, we recall the construction given by \cite[p.4-10]{Douady}. \\
For all open $U$ in $\M_\rho$, we consider biholomorphisms
\[
    f:W \to W'
\]
where $W,W'\subset \M_\rho \times \C$ are open which contains $U\times \{0\}$. We consider the set of such biholomorphisms which preserves the fibers $M\times \{p\}$ and such that $f\vert_{\M_\rho \times \{0\}} =\Id$. We define the sheaf $\Lambda_\rho$ by $\Lambda_\rho(U)$ as the quotient of this set by identify two biholomorphisms which coincides on a neighborhood of $U\times \{0\}$. The important fact is that\\
\textit{The space $H^1(\M_\rho,\Lambda_\rho)$ is identify to the set of classes of germs of deformations of $\Mrho$ parametrized by $(\C,0)$} \cite{Douady}.
    
This sheaf is naturally filtered:\\
For each open $U$, we consider the set of biholomorphisms of $\Lambda_\rho^n(U)$ which are tangent to the identity up to the order $n-1$ and we denote by $\Lambda_\rho^n$ the corresponding sheaf. For all $n\geq 1$, we denote by $Q_\rho^n$ the quotient sheaf $\Lambda_\rho/\Lambda_\rho^{n+1}$. It is well know that (see \cite{Morrow-Kodaira})
\[
    \ker\left(Q_\rho^{n+1} \to Q_\rho^n \right) \simeq \Theta_\rho
\]
Thus, we get the following exact sequence of sheaves
\begin{align}\label{nthorderexactsequence}
    0 \to \Theta_\rho \to Q_\rho^{n+1} \to Q_\rho^n \to 0
\end{align}
The elements of $H^1(\Mrho,Q_{n})$ are called $n$-th order deformation of $\Mrho$.

\begin{proof}[Proof of Theorem \ref{completeness}]
    Assume that up to order $n$, the set of classes of germs of deformation of $\Mrho$ over $\C$ is given by germs of deformations of the representation $\rho$ by cochains $\{c_i\}_{i=1}^n$ via
    \[
        \rho_n\coloneqq\rho^{(c_1,\cdots,c_n)}:\gamma \mapsto \exp\left(\sum_{i=1}^n c_i(\gamma)t^i\right)\rho(\gamma)
    \]
    
    Then, we can equip $\mathfrak{g}_{n}\coloneqq \Slk{n+1}$ with the $\Gamma$-structure given by $\Ad_{\rho_n}$. We denote $\mathfrak{g}_n^{\rho_n}$ the Lie algebra with its $\Gamma$-structure. 
    
    Interpreting $\mathcal{B}_\rho^n\coloneqq H^0(\SL,\pi^*Q_\rho^n)$ as a set of global sections of $n$-jets, we get an injection of $\Gamma$-modules $\mathfrak{g}_n^{\rho_n} \to \mathcal{B}_\rho^n$. These maps induce a morphism between exact sequences
    \begin{center}
        \begin{tikzcd}[column sep=1.4em]
            0 \ar[r]  & \Slrho \ar[r] \ar[d] & \mathfrak{g}_n^{\rho_n} \ar[r] \ar[d] & \mathfrak{g}_{n-1}^{\rho_{n-1}} \ar[r] \ar[d] & 0\\
            0 \ar[r] & \Hrho \ar[r] & \mathcal{B}_\rho^n \ar[r] & \mathcal{B}_\rho^{n-1} \ar[r] & 0
        \end{tikzcd}    
    \end{center}
    which induces, a map of long exact sequences in cohomology of groups 
    \begin{center}
        \begin{tikzcd}
            H^1(\Gamma,\Slrho) \ar[r] \ar[d, "i_1"] & H^1(\Gamma,\mathfrak{g}_n^{\rho_n}) \ar[d, "i_2"] \ar[r] 
            & H^1(\Gamma,\mathfrak{g}_{n-1}^{\rho_{n-1}} ) \ar[d, "i_3"] \ar[r, "\delta"] & H^2(\Gamma,\Slrho) \ar[d, "i_4"] \\
            H^1(\Gamma,\Hrho) \ar[r] \ar[d, "\wr"] & H^1(\Gamma,\mathcal{B}_\rho^n) \ar[r] \ar[d,"\wr"]
            & H^1(\Gamma,\mathcal{B}_\rho^{n-1}) \ar[r] \ar[d,"\wr"] & H^2(\Gamma,\Hrho) \ar[d,"\wr"] \\
            H^1(\Mrho, \Theta_\rho)\ar[r] &H^1(\Mrho,Q_\rho^{n}) \ar[r] 
            & H^1(\Mrho,Q_\rho^{n-1}) \ar[r, "\check{\delta}"] & H^2(\Mrho,\Theta_\rho) \\
        \end{tikzcd}    
    \end{center}
    Proposition \ref{embedding} says that $i_1$ is an isomorphism and $i_4$ is a monomorphism. By assumption, $i_3$ is an isomorphism. The four-lemma states that $i_2$ is surjective, thus it is an isomorphism.
    
    Let $U_n(\gamma)\coloneqq \left( \frac{d}{dt} \rho_n(\gamma) \right)\rho_n(\gamma)^{-1}$ and $f_n$ the corresponding element in $H^1(\Mrho,Q_\rho^{n-1})$. If $f_n$ can be extended to order $(n+1)$ then the class of $\check{\delta} (f_n)$ in $H^2(\Mrho,\Theta_\rho)$ is zero. The class of $[\delta U_n]\in H^2(\gamma,\Slrho)$ is then also zero which is an equivalent condition to the existence of a cochain $c_{n+1}$ such that $\rho_{n+1}\coloneqq \rho^{(c_1,\cdots, c_{n+1})}$ is a morphism up to order $n+1$ (see \cite[Proposition\, 3.1]{HeusenerPorti}). 
    
    Inductively, a deformation of the complex structure of $\Mrho$ parametrized by $(\C,0)$ is given by a formal deformation of the representation $\rho$:
    \[
        \rho_\infty: \gamma \mapsto \exp \left( \sum_{i=1}^\infty c_i(\gamma)t^i \right)\rho(\gamma)
    \]
    The existence of a convergent solution follows directly from a result of Artin \cite{Artin}, as in \cite[Proposition\, 3.6]{HeusenerPorti}.
    This show us that the representation variety is complete at each point that corresponds to an admissible representation and therefore this conclude the proof of theorem \ref{completeness}.
\end{proof}

\subsection{Teichmüller stack}\label{sectionStack}

The Newlander-Nirenberg Theorem \cite{Newlander-Niremberg} says that a structure of a complex
manifold on $M$ is equivalent to a a $C^\infty$ bundle operator $J$ on the tangent bundle of $M$ such that
\[
    J^2=-\Id, \text{ and } [T^{0,1},T^{0,1}]\subset T^{0,1}
\]
Where $T^{0,1} = \{v + iJv \vert\,  v \in TM \otimes \C\}$ is the subbundle given by the eigenvectors of $J$ with eigenvalue $-i$ of the complexified tangent bundle of $M$. 
We denote by $\mathcal{I}(M)$ the set of complex structure on the $C^\infty$ manifold $M^\diff$ (forgetting its natural complex structure). Note that the group $\Diff(\Mrho)$ of $C^\infty$-diffeomorphisms of $M$ act on $\mathcal{I}(M)$ as
\[
    \Diff(M) \times \mathcal{I}(M) \to \mathcal{I}(M), \quad (f,J) \to (df)^{-1}\circ J\circ df
\]
The Teichmüller space of $M$, denoted $\mathcal{T}(M)$, is given by the quotient of $\mathcal{I}(M)$ by the action of the subgroup $\Diff^0(M)$ of $\Diff(M)$ formed by diffeomorphisms isotopic to the identity. There exists example of manifold $M$ for which (see \cite[Example 12.3]{MeerssemanTeichStack}), this topological space does not admit a structure of analytic space. But, under some assumption on the dimension of the group of automorphisms of $M$, the Teichmüller has a structure of Artin stack and we shall review some definitions of its construction.  

\subsection{Teichmüller and character stack}

Let $\An$ be the category of complex analytic space. In this note, a \textit{stack} is a stack in groupoids over the site $\An$ in the sense of \cite[Definition\, 8.5.1]{stacks-project}.
Let $M$ be a $C^\infty$ manifold which admits a complex structure $J$. We construct the Teichmüller stack $\mathcal{T}(M)$ of $M$ as the category whose 
\begin{itemize}
    \item \textit{objects} are deformations of $M$ which are $\Diff^0(M)$-bundle when considered in the $C^\infty$ categorie.\\
    That is, smooth and proper morphism $\pi:X\to B$, between objects $X,B\in (\An)$, which is diffeomorphic, when considered as real analytic spaces, to a bundle $E\to B$ with fiber $M$ and structural group reduced to $\Diff^0(M)$.
    \item \textit{morphisms} are cartesian diagrams 
    \begin{center}
        \begin{tikzcd}
            X' \ar[r] \ar[d,"\pi'"] & X \ar[d, "\pi"]\\
            B' \ar[r, "f"] & B
        \end{tikzcd}    
    \end{center}
    where the isomorphism $f^*X\simeq X'$ induced a $\Diff^0(M)$-isomorphism of the smooth bundle structure in the category of real analytic spaces.
\end{itemize}
If $V$ is an open subset of $\mathcal{I}(M)$, we can define in the same way $\mathcal{T}_V(M)$ the Teichmüller stack of $M$ for complex structures belonging to $V$, that is objects are smooth morphisms $\pi:X\to B$ as well but the complex structures on fibers of $\pi$ belongs to $V$.
For more details see \cite{MeerssemanTeichStack}. 

We want to define a map $i:\Rep^a \to \mathcal{I}(\M)$ which sends an admissible representation $\rho$ to the bundle operator corresponding to the natural complex structure of $\Mrho$. So we can define $\mathcal{T}_{\Rep^a}(\M)$ the Teichmüller stack of $\M$ for complex structures arising as $\Mrho$ for some $\rho\in \Rep^a$.
The way to construct $i$ is the following. Take $\rho \in \Rep^a$ and consider the frame bundle $F(\Mrho^{\diff})$ of $\Mrho^{\diff}$ the $C^\infty$ manifold underlying $\Mrho$. Points in this bundle over $x\in \Mrho$ are identified with linear isomorphisms $\R^6\to T_x\Mrho^{\diff}$. Note that the tangent bundle $\Ad_\rho(\SL)$ (see \eqref{action}) gives a natural subbundle of $F(\Mrho^{\diff})$ by $\C$-linear isomorphisms $\C^3\to T_x\Mrho^{\diff}$ and the corresponding reduction of the structural group is exactly the $C^\infty$ bundle operator $J_\rho$ corresponding to the complex structure of $\Mrho$.
Hence, we define $i$ by $i:\rho \mapsto J_\rho$.

Naturally, we define

\begin{definition}
    The \textit{character stack} (\textit{resp. admissible character stack}) is the quotient stack 
    \[
        [\Rep/\SL], \quad (\textit{resp.}\ [\Rep^a/\SL])
    \]
    over the site $\An$.
\end{definition}

Obviously the admissible character stack is a substack of the character stack in the sense of \cite[Definition\, 6.9,\, p.112]{AlgebraicStacks}, that is a full saturated subcategory of the character stack which is also a stack. 

\begin{remark}\label{Remark analytic vs algebraic}
    It is important to notice that the character stack see as a stack over the site $\Sch$ of schemes is algebraic but the admissible character stack is not since $\Rep^a$ is not a Zariski open in $\Rep$ (see remark \ref{Admissible variety is not Zariski open}). However, both of them are analytic stacks and this explains why we have to work on the analytic site rather than an algebraic one.
\end{remark}

\begin{theorem}
    The admissible character stack is an open substack of the Teichmüller stack of $\M$.
\end{theorem}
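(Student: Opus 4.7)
The plan is to construct a morphism of analytic stacks $\Phi\colon [\Rep^a/\SL]\to \mathcal{T}_{\Rep^a}(\M)$, verify it is an equivalence, and then conclude by noting that $\mathcal{T}_{\Rep^a}(\M)$ is tautologically open in $\mathcal{T}(\M)$, since $i(\Rep^a)$ is open in $\mathcal{I}(\M)$ (because $\Rep^a$ is a classical open in $\Rep$ and $i$ is continuous).

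First, to define $\Phi$ on objects, I would associate to an analytic morphism $f\colon B\to \Rep^a$ the fibered quotient of $\SL\times B\to B$ by the fiberwise $\Gamma$-action $(\gamma,(x,b))\mapsto (\rho_b(\gamma)^{-1}x\gamma,b)$. Since admissibility is a classical open condition, this action is free and properly discontinuous on every fibre, so the quotient $X_B\to B$ is a proper smooth analytic family with fibre $\M_{\rho_b}$. To factor through the quotient stack, I would check that conjugation by $g\in \SL$ induces a canonical biholomorphism $\M_\rho\simeq \M_{g\rho g^{-1}}$, $[x]\mapsto [gx]$, which lies in $\Diff^0(\M)$ because left translations along a path in $\SL$ from $\Id$ to $g$ give the required isotopy. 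To see that $X_B\to B$ is a $\Diff^0(\M)$-bundle in the $\mathcal{C}^\infty$ category, I would combine Ehresmann's fibration theorem applied to the proper smooth morphism $X_B\to B$ with the local path-connectedness of $\Rep^a$ and the $\SL$-isotopy argument above.

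Second, I would verify that $\Phi$ is essentially surjective onto an open substack and fully faithful. Essential surjectivity locally on the base is exactly the content of Theorem \ref{completeness}: every object $Y\to B$ of $\mathcal{T}_{\Rep^a}(\M)$ is classified, on a neighborhood of any base point and up to $\SL$-conjugation, by an analytic map $B\to \Rep^a$, since completeness produces the factorization of the classifying morphism through the representation variety. Full faithfulness on isotropy follows from Proposition \ref{aut1}: the stabilizer of $\rho$ under $\SL$-conjugation is $C_{\SL}(\rho(\Gamma))$, matching $\Aut^1(\M_\rho)\simeq C_{\SL}(\rho(\Gamma))/\{\pm\Id\}$ modulo the central $\{\pm\Id\}$, which acts trivially on $\Rep^a$ and descends trivially on each $\M_\rho$ (this is exactly the passage quotiented out in the definition of $PG_\rho$).

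The main obstacle will be to rigorously globalize the $\Diff^0(\M)$-bundle trivialization over the total space of $B$ rather than merely fiberwise: freeness and proper discontinuity of the $\Gamma$-action follow pointwise from admissibility, but showing the transition functions lie globally in $\Diff^0(\M)$ requires the openness of $\Rep^a$ in the classical (not Zariski) topology, as pointed out in Remark \ref{Admissible variety is not Zariski open}, together with the continuous-path arguments that made $\Phi$ well-defined on objects in the first place. Once these technicalities are handled, the equivalence $[\Rep^a/\SL]\overset{\sim}{\longrightarrow} \mathcal{T}_{\Rep^a}(\M)$ together with the tautological open immersion $\mathcal{T}_{\Rep^a}(\M)\hookrightarrow \mathcal{T}(\M)$ realizes $[\Rep^a/\SL]$ as the desired open substack.
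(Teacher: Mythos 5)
Your overall architecture (tautological family over $\Rep^a$, Theorem \ref{completeness} to classify objects locally on the base, Proposition \ref{aut1} to match isomorphisms of families with the arrows of the conjugation groupoid) is the same as the paper's. But the step you dismiss as ``tautological'' is exactly where the gap is. You claim that $\mathcal{T}_{\Rep^a}(\M)$ is open in $\mathcal{T}(\M)$ because ``$i(\Rep^a)$ is open in $\mathcal{I}(\M)$, since $\Rep^a$ is a classical open in $\Rep$ and $i$ is continuous.'' This inference is false: continuity of $i$ only says that preimages of opens are open, not that images of opens are open. In fact $i(\Rep^a)$ cannot be open in $\mathcal{I}(\M)$: it is the image of a finite-dimensional variety inside the infinite-dimensional space of integrable almost complex operators on $\M^{\diff}$, so it is a very thin subset. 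The open set $V^a\subset\mathcal{I}(\M)$ one actually needs consists of all complex structures $J$ whose underlying manifold $(\M^{\diff},J)$ is equivalent, in the Teichm\"uller sense, to some $\Mrho$ with $\rho$ admissible, and the openness of this (saturated) set is not a formal topological fact. It is precisely the content of Theorem \ref{completeness}: completeness of the family $\{\Mrho\}\to\Rep^a$ at each admissible $\rho$ guarantees that every sufficiently small deformation of $\Mrho$ --- in particular every complex structure in a neighborhood of $J_\rho$ in $\mathcal{I}(\M)$, viewed through the local tautological deformation it defines --- is again induced by a representation $\rho'$ near $\rho$, which is admissible by the classical openness of $\Rep^a$ (Remark \ref{Admissible variety is not Zariski open} only enters to say this openness is classical, not Zariski). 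So the openness of the substack is a consequence of completeness, not of point-set topology; as written, your proof uses completeness only for essential surjectivity onto a substack whose openness is never established, and openness is the actual assertion of the theorem.

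The remainder of your argument is essentially the paper's: objects are handled by the completeness theorem (an object of the Teichm\"uller stack over a small base is an $\SL$-bundle with an equivariant classifying map to $\Rep^a$), and morphisms are handled by lifting a fiber-preserving isomorphism of pulled-back tautological families to $\SL\times\Rep^a$ and using $\Aut^1(\Mrho)$ to see that it is fiberwise conjugation by some $f(\rho)\in\SL$, i.e.\ an arrow of the groupoid $\SL\times\Rep^a\rightrightarrows\Rep^a$. Your remark about the central $\{\pm\Id\}$ (which acts trivially both on $\Rep^a$ and on each $\Mrho$) is a fair point of bookkeeping, and your isotopy argument via paths in the connected group $\SL$ for placing conjugation-induced biholomorphisms in $\Diff^0(\M)$ is consistent with what the paper uses implicitly; but neither of these repairs the missing openness argument.
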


\begin{proof}
    The completeness theorem \ref{completeness} implies that there exists an open $V^a \subset \mathcal{I}(\M)$ of complex structures $\Mrho$ given by representations $\rho\in \Rep^a$. Hence, locally we know that any deformation $\mathcal{X}\to B$ in the Teichmüller stack $\mathcal{T}_{V^a}(\M)$ can be seen as a $\SL$-principal bundle $P \to B$ with an $\SL$-equivariant map $p:P\to \Rep^a$, that is an element of the (admissible) character stack.
    
    Denote by $\mathfrak{X}\to \Rep^a$ the tautological family above $\Rep^a$, that is $\mathfrak{X}$ is obtained as the quotient of $\SL\times \Rep^a$ by the action of $\Gamma$ :
    \begin{align*}
        \Gamma\times \SL\times \Rep^a \to& \SL\times \Rep^a \\
        (\gamma,x,\rho)\mapsto& (\rho(\gamma)^{-1}x\gamma,\rho)
    \end{align*}
    We restrict our attention on isomorphism between $\SL$-torsors so we only look at tautological families. Let $B$ be an analytic space and $\phi,\psi:B\to \Rep^a$ analytic maps such that the induced tautological families $\phi^*\mathfrak{X}\to B$ and $\psi^*\mathfrak{X}\to B$ are isomorphic in the Teichmüller stack. So there exists an analytic map $F:\phi^*\mathfrak{X} \to \psi^*\mathfrak{X}$ such that
    \begin{center}
        \begin{tikzcd}
            \phi^*\mathfrak{X} \ar[r,"F"] \ar[d, "\pi_\phi"] & \psi^*\mathfrak{X} \ar[d, "\pi_\psi"]\\
            B \ar[r, "\Id"] & B
        \end{tikzcd}
    \end{center}
    is a cartesian diagram and $F$ is a $\Diff^0$-bundle isomorphism. Lifting $F$ to an analytic map $\widetilde{F}:\SL\times \Rep^a \to \SL\times \Rep^a$, we see that on each fibers $\left. \widetilde{F} \right\vert_{\pi_\phi^{-1}(b)}(x,\rho)=(\iota_g(x),\iota_g\circ \rho)$, where $g\in \Aut^1(\Mrho)\simeq C_{\SL}(\rho(\Gamma))$ by proposition \ref{aut1}. Doing this on each fibers, we obtain a map 
    \[
        f:\Rep^a \to \SL
    \]
    such that $\widetilde{F}(x,\rho)=(\iota_{f(\rho)}(x),\iota_{f(\rho)}\circ \rho)$. This application obviously satisfies $s(f(\rho),\rho)=\rho$ and $t(x,f)=\iota_{f(\rho)}(\rho)$, where the map $s$ and $t$ are the source and the target map of the Lie groupoid 
        \[
            \SL\times \Rep^a \overset{\iota}{\underset{p_2}{\rightrightarrows}} \Rep^a
        \]
    that is $s$ is the projection on the second factor and $t$ is the $\SL$-action of conjugation on $\Rep^a$.
\end{proof}

We easily deduce the following corollary, which is a reformulation of the theorem \ref{TeichmullerStack}:

\begin{corollary}
    The Lie groupoid 
    \[
        \SL\times \Rep^a \overset{\iota}{\underset{p_2}{\rightrightarrows}} \Rep^a
    \]
    is an atlas for $\mathcal{T}_{\Rep^a}(\M)$.
\end{corollary}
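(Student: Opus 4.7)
The plan is to derive this corollary directly from Theorem \ref{TeichmullerStack}, which already identifies the admissible character stack $[\Rep^a/\SL]$ with an open substack of $\mathcal{T}_{\Rep^a}(\M)$. Once this identification is in hand, it remains only to match up the canonical atlas of a quotient stack with the action groupoid stated in the corollary.

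First I would recall the general principle that for the analytic quotient stack $[X/G]$ associated with an action of a complex Lie group $G$ on an analytic space $X$, the projection $X \to [X/G]$ is a smooth presentation whose associated analytic groupoid is the action groupoid
\[
    G\times X \overset{\alpha}{\underset{p_2}{\rightrightarrows}} X,
\]
with source $p_2$ the projection on the second factor and target $\alpha$ the action. Specialising to $X=\Rep^a$ and $G=\SL$ acting by conjugation, the action map is precisely $\iota:(g,\rho)\mapsto \iota_g\circ\rho$, producing exactly the groupoid of the statement.

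Next I would transport this atlas through the equivalence provided by the theorem. Because the theorem exhibits $[\Rep^a/\SL]$ as an open substack of $\mathcal{T}_{\Rep^a}(\M)$ via the tautological family $\mathfrak{X}\to \Rep^a$, the composition $\Rep^a \to [\Rep^a/\SL] \hookrightarrow \mathcal{T}_{\Rep^a}(\M)$ is itself an atlas, and being an atlas is preserved under stack equivalences. Consequently the associated groupoid $\SL\times\Rep^a \rightrightarrows \Rep^a$ presents $\mathcal{T}_{\Rep^a}(\M)$.

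The main point one must verify, and which I expect to be the only genuinely nontrivial step, is that the groupoid structure on the Teichmüller side is indeed the action groupoid and not a larger one: concretely, one needs that two analytic families $\phi^*\mathfrak{X}$ and $\psi^*\mathfrak{X}$ over a base $B$ become isomorphic in $\mathcal{T}_{\Rep^a}(\M)$ if and only if there exists an analytic map $f:B\to\SL$ with $\psi=\iota_{f(\cdot)}\circ \phi$. This is exactly the content of the last part of the proof of Theorem \ref{TeichmullerStack}, where Proposition \ref{aut1} is used to identify the fibrewise symmetries of $\Mrho$ with $C_{\SL}(\rho(\Gamma))/\{\pm\Id\}$ and to lift the bundle isomorphism $F$ to a conjugation family $\widetilde{F}(x,\rho)=(\iota_{f(\rho)}(x),\iota_{f(\rho)}\circ\rho)$; consequently no argument beyond invoking those results is required to conclude.
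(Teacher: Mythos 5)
Your proposal is correct and follows essentially the same route as the paper, which presents this corollary as a direct reformulation of Theorem \ref{TeichmullerStack}: the action groupoid $\SL\times\Rep^a\rightrightarrows\Rep^a$ is the canonical presentation of $[\Rep^a/\SL]$, and the identification of isomorphisms of families with conjugation maps $f:B\to\SL$ (via Proposition \ref{aut1}) together with the completeness argument transports this atlas to $\mathcal{T}_{\Rep^a}(\M)$. No further argument is needed beyond what you invoke.
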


\begin{remark}
    However, it is an open question to know if this open substack is a union of connected components of the Teichmüller stack, or if it is not, what is the boundary of this substack.
\end{remark}

\section{Characters stack versus character variety}

We want to emphasize the use of stack instead of GIT quotient. As remarked before (see remark \ref{Remark analytic vs algebraic}), $\Rep^a$ is not a Zariski open when $b_1(\Gamma)\neq 0$, hence it is not possible to form the quotient (in the sense of geometric invariant theory) nor the algebraic stack associated. Actually, the situation seems to be even worse, for instance, let $\pi:\Rep \to \Rep//\SL$ be the affine quotient and take $\rho\in \Rep^a$ such that its orbit is not closed, there is no reason for $\pi^{-1}(\pi(\rho))$ to contains only admissible representations, even if we don't have any example of such situation.

We want to underline the fact that the character stack contains more informations as the character variety. To do so, we can look at fibers of the morphism 
\[
    \phi:[\Rep^a/\SL]\to [\Rep//\SL]
\]
where $X(\Gamma)\coloneqq[\Rep//\SL]$ stands for the stack associated to the affine quotient $\Rep//\SL$, over the site $\An$. Let $\chi$ be a point in $X(\Gamma)$, then the it is easy to see that the preimage of $\chi$ by $\phi$ is formed by all $\SL$-principal bundles $\mathcal{O}(\rho) \to \{\rho\}$ such that $\pi(\rho)=\chi$. Whenever $\chi$ is obtained as the trace character of two non conjugated representations $\rho$ and $\eta$, the preimage of $\phi$ contains two non biholomorphic families in the Teichmüller stack. In other words, there are points in the Teichmüller stack which are identified in the character variety.

\section{Kodaira-Spencer map}

In this section, we will show that the Kodaira-Spencer map associated to the natural deformation over $\Rep^a$ is surjective at each point.   

Consider the variety $\widetilde{\mathfrak{X}}\coloneqq\SL \times \Rep^a$ and its quotient $\mathfrak{X}$ by $\Gamma$ given by the action
\begin{align*}
    \Gamma \times \widetilde{\mathfrak{X}}\ \to \ & \widetilde{\mathfrak{X}} \\
    (\gamma,(x,\rho))\ \mapsto \ & (\rho(\gamma^{-1})x\gamma,\rho)
\end{align*}
The natural projection $p_2:\mathfrak{X} \to \Rep^a$ is a deformation of complex structures with $\Mrho\subset \mathfrak{X}$ above $\rho\in \Rep^a$. Let $\rho$ be an admissible representation and $V$ a Stein open neigborhood in $\Rep^a$ containing $\rho$. One can consider the fundamental exact sequence of this deformation restricted to $V$ 
\begin{align*}
    0\to \Theta|_{p_2^{-1}(V)} \to \Pi|_{p_2^{-1}(V)} \to \Upsilon|_{p_2^{-1}(V)} \to 0
\end{align*}
Where $\Theta$ is the sheaf of germs of holomorphic vector fields on fibers of $p_2$, $\Pi$ is the sheaf of projectable vector fields and $\Upsilon$ the sheaf of germs of vector fields on $\Rep$.\\
And for the infinitesimal neigborhood of $\rho$, this sequence tends to
\begin{align}\label{fundasequence}
    0\to \Theta|_{\Mrho} \to \Pi|_{\Mrho} \to \Upsilon|_{\Mrho} \to 0
\end{align}
It is well known that the Kodaira-Spencer map $\mathcal{KS}_\rho$ of this deformation is the connecting homorphism of the long exact sequence associated to it. 

Let $V$ be an small Stein neigborhood of $\rho$ in $\Rep$. Since $\widetilde{\mathfrak{X}}\vert_V=\SL\times V $ is a product of Stein manifolds it is also Stein. The pullback sheaf is not necesseraly coherent, but $\widetilde{\mathfrak{X}}\vert_V$ is an open in affine and thus noetherian so the pullback of any coherent sheaf if also coherent. Then by Cartan's theorem $H^i(\widetilde{\mathfrak{X}}\vert_V, \pi^*\Theta|_V)=\{0\}$ for $i>0$, in particular for $i=1$. Hence, on the infinitesimal neighborhood of $\rho$, the sequence 
\[
    0 \to H^0(\SL,\pi^*\Theta|_{\Mrho}) \to H^0(\SL,\pi^*\Pi|_{\Mrho}) \to H^0(\SL,\pi^*\Upsilon|_{\Mrho}) \to 0
\]
is exact.

Hence, one can consider the following part of the associated long exact sequence:
\begin{align}\label{KodairaSpencerGroupCohomology}
    0 \longrightarrow \Hrho^\Gamma  \longrightarrow H^0(\Gamma,H^0(\SL,\pi^*\Pi|_{\Mrho})) \longrightarrow Z^1(\Gamma,\Slrho) \overset{\widetilde{\mathcal{KS_\rho}}}{\longrightarrow} H^1(\Gamma,\Hrho)
\end{align}
where $Z^1(\Gamma,\Slrho)\simeq T_\rho\Rep\simeq H^0(\Gamma,H^0(\SL,\pi^*\Upsilon|_{\Mrho})) $. This exact sequence is isomorphic to the long exact sequence associated to \eqref{exactsequence} in \v Cech cohomology by \cite{Mumford}. All diagrams formed by this isomorphism is commutative and this is why we called $\widetilde{\mathcal{KS_\rho}}$ the map above the Kodaira-Spencer map:
\begin{center}
    \begin{tikzcd}
        \cdots \ar[r] & Z^1(\Gamma,\Slrho) \ar[d,"\rotatebox{90}{\(\sim\)}"] \ar[r,"\widetilde{\mathcal{KS_\rho}}"] & H^1(\Gamma,\Slrho) \ar[d,"\rotatebox{90}{\(\sim\)}"] \ar[r] & \cdots\\
        \cdots \ar[r] & H^0(\Mrho,\Upsilon_{\Mrho}) \ar[r, "\mathcal{KS}"] & H^1(\Mrho,\Theta_\rho) \ar[r] & \cdots
    \end{tikzcd} 
\end{center}

\begin{proposition}\label{projetablevf}
    Let $\rho$ be an admissible representation. Then $$H^0(\Mrho,\Pi|_{\Mrho})\simeq H^0(\SL,\pi^*\Pi|_{\Mrho})^\Gamma \simeq \Sl$$ 
\end{proposition}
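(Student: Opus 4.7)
My plan would be to combine the exact sequence \eqref{KodairaSpencerGroupCohomology} with a direct construction of projectable vector fields coming from the $\SL$-action on the tautological family, and then close the argument with the five-lemma.

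\textbf{First isomorphism.} Since $\pi:\SL\to\Mrho$ is a $\Gamma$-principal cover and $\SL$ is Stein, the pullback principle used in Lemma \ref{Cechtogroupcohomology} gives
\[
H^0(\Mrho,\Pi|_{\Mrho})\simeq H^0(\SL,\pi^*\Pi|_{\Mrho})^\Gamma=H^0\bigl(\Gamma,H^0(\SL,\pi^*\Pi|_{\Mrho})\bigr),
\]
which is the easy half.

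\textbf{Extracting a short exact sequence.} The four-term sequence \eqref{KodairaSpencerGroupCohomology} rewrites as
\[
0\longrightarrow \Hrho^\Gamma\longrightarrow H^0(\Mrho,\Pi|_{\Mrho})\longrightarrow \ker\widetilde{\mathcal{KS}_\rho}\longrightarrow 0,
\]
and I would compute its two outer terms. For the right one, the commutative square at the end of the subsection on the Kodaira--Spencer map exhibits $\widetilde{\mathcal{KS}_\rho}$ as the composition of the natural projection $Z^1(\Gamma,\Slrho)\twoheadrightarrow H^1(\Gamma,\Slrho)$ with the isomorphism of Proposition \ref{embedding}; hence $\ker\widetilde{\mathcal{KS}_\rho}=B^1(\Gamma,\Slrho)$, canonically identified with $\Sl/\Sl^{\rho(\Gamma)}$ via the coboundary $X\mapsto(\gamma\mapsto X-\Ad_{\rho(\gamma)}X)$, where $\Sl^{\rho(\Gamma)}$ denotes the Lie algebra of $C_{\SL}(\rho(\Gamma))$. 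For the left one, an element of $\Hrho^\Gamma$ is a holomorphic $\Gamma$-equivariant map $f:\SL\to\Sl$; its restriction to $\Gamma_0=\ker\rho$ is simply right-invariance, so by the Zariski-density argument of Ghys already invoked in the proof of Proposition \ref{embedding}, $f$ is right-invariant under the Zariski closure $\SL$ of $\Gamma_0$, hence constant, and the remaining $\Gamma$-equivariance forces the constant value to lie in $\Sl^{\rho(\Gamma)}$. Thus we obtain
\[
0\to \Sl^{\rho(\Gamma)}\to H^0(\Mrho,\Pi|_{\Mrho})\to \Sl/\Sl^{\rho(\Gamma)}\to 0,
\]
which already pins down the dimension to $3$.

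\textbf{Identification with $\Sl$.} To promote this to a canonical isomorphism I would differentiate an explicit $\SL$-action on the tautological family. The assignment $g\cdot(x,\rho)=(gx,\,g\rho g^{-1})$ on $\SL\times\Rep^a$ commutes with the $\Gamma$-action \eqref{DefinitiondesMrho} (a direct check), so it descends to $\mathfrak{X}$ lying above the conjugation action on $\Rep^a$. The associated infinitesimal generator in direction $X\in\Sl$ is a projectable vector field on $\mathfrak{X}$ whose restriction to $\Mrho$ lies in $\Pi|_{\Mrho}$: its image in $T_\rho\Rep^a$ is precisely the coboundary $\gamma\mapsto X-\Ad_{\rho(\gamma)}X$, and its vertical component, read in the trivialization $T\Mrho\simeq\SL\times_{\Ad_\rho}\Sl$, is the constant section $X$. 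This produces a morphism of short exact sequences
\begin{center}
\begin{tikzcd}[column sep=1.4em]
0 \ar[r] & \Sl^{\rho(\Gamma)} \ar[r] \ar[d, equal] & \Sl \ar[r] \ar[d] & \Sl/\Sl^{\rho(\Gamma)} \ar[r] \ar[d, equal] & 0 \\
0 \ar[r] & \Sl^{\rho(\Gamma)} \ar[r] & H^0(\Mrho,\Pi|_{\Mrho}) \ar[r] & \Sl/\Sl^{\rho(\Gamma)} \ar[r] & 0
\end{tikzcd}
\end{center}
and the five-lemma forces the middle arrow to be an isomorphism.

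The main obstacle I anticipate lies in the second step: the computation $\ker\widetilde{\mathcal{KS}_\rho}=B^1(\Gamma,\Slrho)$ rests crucially on Proposition \ref{embedding}, and the identification $\Hrho^\Gamma=\Sl^{\rho(\Gamma)}$ reuses Ghys's non-trivial density argument. Once these two ingredients are in hand, the construction of the canonical lift and the five-lemma are routine.
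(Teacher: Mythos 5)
Your overall architecture is coherent, and several pieces are correct and match ingredients the paper also uses: the identification $\Hrho^\Gamma\simeq\Sl^{\rho(\Gamma)}$ via Ghys's Zariski-density argument is fine, and the $\SL$-action $g\cdot(x,\rho)=(gx,g\rho g^{-1})$ does commute with the $\Gamma$-action, so its infinitesimal generators (vertical part the constant $X$, horizontal part the coboundary $\gamma\mapsto X-\Ad_{\rho(\gamma)}X$) give exactly the injection $\Sl\to H^0(\Mrho,\Pi|_{\Mrho})$ that the paper's proof also exhibits.

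The genuine gap is your computation $\ker\widetilde{\mathcal{KS}_\rho}=B^1(\Gamma,\Slrho)$. You justify it by reading off from the commutative square at the end of the Kodaira--Spencer subsection that $\widetilde{\mathcal{KS}_\rho}$ is the canonical projection $Z^1(\Gamma,\Slrho)\to H^1(\Gamma,\Slrho)$ composed with the isomorphism of Proposition \ref{embedding}. That square, however, only says that $\widetilde{\mathcal{KS}_\rho}$ corresponds to the geometric Kodaira--Spencer map under the vertical isomorphisms; it computes neither map. In the paper, the identification of $\mathcal{KS}_\rho$ with the canonical projection is precisely the conclusion of Proposition \ref{KSsurjective}, whose proof invokes Proposition \ref{projetablevf} --- the statement you are proving --- so your argument is circular as written. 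It can be repaired without circularity: compute the connecting homomorphism of $0\to\Hrho\to H^0(\SL,\pi^*\Pi|_{\Mrho})\to H^0(\SL,\pi^*\Upsilon|_{\Mrho})\to 0$ directly, by lifting $c\in Z^1(\Gamma,\Slrho)$ to the (non-invariant) section $(0,c)$ and evaluating $\gamma\cdot(0,c)-(0,c)$ with the formula for $(G_\gamma)_*$; one finds a cocycle with values in the constants $\Slrho\subset\Hrho$ representing the image of $[c]$ in $H^1(\Gamma,\Hrho)$, and then Proposition \ref{embedding} gives $\ker\widetilde{\mathcal{KS}_\rho}=B^1(\Gamma,\Slrho)$. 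Note that this is exactly the $(G_\gamma)_*$ bookkeeping with which the paper's own proof begins; the paper then proceeds differently, showing directly that any invariant pair $(v,c)$ has $v$ constant (via the $\Gamma_0\times\Gamma_0$ Zariski-density trick and $\SL^{ab}=\{\Id\}$) and hence $c$ a coboundary, with no recourse to \eqref{KodairaSpencerGroupCohomology}. With the connecting map computed, your cohomological route (long exact sequence plus five lemma, trading the paper's second density argument for Proposition \ref{embedding}) is a valid alternative; without it, the key step is unproven.
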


\begin{proof}
    We show that the vector space of projectable vector fields, i.e. vector fields that descends to the quotient, is isomorphic to $\Sl$. Let 
    \begin{align*}
        G_\gamma: \widetilde{\mathfrak{X}}& \to  \widetilde{\mathfrak{X}} \\
        (x,\rho) & \mapsto \  (\rho(\gamma^{-1})x\gamma,\rho)
    \end{align*}
    Then a vector field $V$ on $\widetilde{\mathfrak{X}}|_\rho$ is projectable if, and only if, $(G_\gamma)_*V=V,\ \forall \gamma\in\Gamma$. We decompose $V$ in $(v,c)$ where $v$ is a vector field on $\SL$ and $c$ is a cocycle in $Z^1(\Gamma,\Slrho)\simeq T_\rho\Rep^a$.
    From 
    $$(G_\gamma)_*\begin{pmatrix}v(x)\\0 \end{pmatrix}=\begin{pmatrix}Ad_{\rho(\gamma)^{-1}}(v(x))\\0 \end{pmatrix}\text{ and }(G_\gamma)_*\begin{pmatrix}0\\c \end{pmatrix}=\begin{pmatrix} c(\gamma^{-1})\\c \end{pmatrix} $$
    it follows that $V$ is $\Gamma$-invariant, or projectable, if 
    \begin{align}
        c(\gamma^{-1}) +Ad_{\rho(\gamma)^{-1}}(v(x))=v(\rho(\gamma)^{-1}x\gamma)
    \end{align}
    Or equivalently $$c(\gamma)=v(\rho(\gamma)x\gamma^{-1})- Ad_{\rho(\gamma)}(v(x))$$
    Let us remark that if $v$ is constant, then $c$ has to be a coboundary to form a projectable vector field. Hence, for $\gamma, \delta\in \ker(\rho)$, we get
    \begin{align*}
        c((\gamma\delta)^{-1}) & =v(x\gamma\delta)-v(x)\\
        & = c(\gamma^{-1})+c(\delta^{-1}) \\
        & =v(x\gamma)-v(x)+v(x\delta)-v(x)
    \end{align*}
    Putting these lignes together, we obtain
    \begin{align}\label{eqnProjectablevectorfield}
        v(x)= v(x\gamma)+v(x\delta)-v(x\gamma\delta)
    \end{align}
    If $v$ is fixed by the action described by this formula, see as an action of the subgroup $\ker(\rho) \times \ker(\rho)$ in $\SL\times \SL$, then it is fixed by the Zariski closure of it. By lemma \cite[Lemme 5.6]{Ghys}, the Zariski closure of $\ker(\rho)$ is $\SL$ and the Zariski closure of a product is the product of the Zariski closure (true for general topology), so that 
    \[
        \overline{\ker(\rho) \times \ker(\rho)}^{Zar}=\SL\times \SL
    \]
    As constant vector fields together with the corresponding coboundary form a suitable projetable vector field, one can suppose that $v(\Id)=0_{\Sl}$. Thus, \eqref{eqnProjectablevectorfield} implies (for $x=\Id$) that $v$ is an holomorphic morphism from $\SL$ to its Lie algebra.\\
    Finally, as $\Sl$ is an abelian group, $v$ factorizes through 
    \[
        \SL^{ab}=\SL/\left[\SL,\SL \right]=\{\Id\}
    \]
    and $v$ is globally constant and by assumption equal to $0$.
\end{proof}

\begin{proposition}\label{KSsurjective}
    For any $\rho \in \Rep^a$, the Kodaira-Spencer map $\mathcal{KS}_\rho$ is surjective.
\end{proposition}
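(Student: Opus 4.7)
The plan is to identify the Kodaira--Spencer map, under the isomorphisms of Lemma~\ref{Cechtogroupcohomology} and Proposition~\ref{embedding}, with the tautological quotient
\[
q\colon Z^1(\Gamma,\Slrho)\twoheadrightarrow H^1(\Gamma,\Slrho),
\]
which is surjective by the very definition of group cohomology. The commutative diagram displayed just before the statement then transports surjectivity of $q$ to surjectivity of $\mathcal{KS}_\rho$ via the isomorphism $H^1(\Gamma,\Slrho)\simeq H^1(\Gamma,\Hrho)\simeq H^1(\Mrho,\Theta_\rho)$ supplied by Proposition~\ref{embedding} together with Lemma~\ref{Cechtogroupcohomology}.

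To carry this out, I would first reduce to showing that the top-row map $\widetilde{\mathcal{KS}_\rho}\colon Z^1(\Gamma,\Slrho)\to H^1(\Gamma,\Slrho)$, namely the connecting homomorphism attached to the short exact sequence of $\Gamma$-modules underlying \eqref{fundasequence}, coincides with $q$. This is a direct unwinding of the \v Cech-to-group cohomology dictionary used in the proof of Lemma~\ref{Cechtogroupcohomology}: representing a tangent direction at $\rho$ by a smooth family $\rho_t$ with $\rho_0=\rho$ and derivative cocycle
\[
c(\gamma)=\left.\tfrac{d}{dt}\rho_t(\gamma)\right|_{t=0}\rho(\gamma)^{-1}\in Z^1(\Gamma,\Slrho),
\]
the Weyl-type computation recalled earlier in the paper produces a \v Cech $1$-cocycle on $\Mrho$ valued in $\Theta_\rho$ whose class, once translated into group cohomology via the cover $\pi\colon\SL\to\Mrho$, is exactly $[c]\in H^1(\Gamma,\Slrho)$. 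Thus $\widetilde{\mathcal{KS}_\rho}=q$, and surjectivity follows at once.

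The main obstacle is the diagram chase identifying the \v Cech connecting homomorphism with the tautological projection $Z^1\to H^1$ under the isomorphisms above: concretely, one must check that the comparison of the long exact sequence of \eqref{fundasequence} with the long exact sequence of the corresponding short exact sequence of $\Gamma$-modules is compatible with the Mumford isomorphisms of Lemma~\ref{Cechtogroupcohomology}. A complementary route would be to extend the exact sequence \eqref{KodairaSpencerGroupCohomology} by one more term and prove directly that
\[
H^1(\Gamma,\Hrho)\longrightarrow H^1(\Gamma,H^0(\SL,\pi^{*}\Pi|_{\Mrho}))
\]
vanishes, using the structure of projectable vector fields provided by Proposition~\ref{projetablevf}; however this avenue looks significantly more technical than the direct identification above.
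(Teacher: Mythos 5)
Your proposal is correct and is essentially the paper's own argument: both reduce, via the exact sequence \eqref{KodairaSpencerGroupCohomology}, to identifying $\widetilde{\mathcal{KS}}_\rho$ with the tautological projection $Z^1(\Gamma,\Slrho)\to H^1(\Gamma,\Slrho)$ and then conclude by the isomorphism $H^1(\Gamma,\Slrho)\simeq H^1(\Gamma,\Hrho)\simeq H^1(\Mrho,\Theta_\rho)$ from Proposition \ref{embedding} and Lemma \ref{Cechtogroupcohomology}. The only real difference is bookkeeping: the paper grounds that identification in Proposition \ref{projetablevf} through a ladder of exact sequences (the route you set aside as ``more technical''), whose explicit pushforward formulas supply exactly the short computation that your ``direct unwinding'' of the Weyl construction leaves implicit.
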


\begin{proof}
    By \eqref{KodairaSpencerGroupCohomology}, it is equivalent to prove that 
    \[
        \widetilde{\mathcal{KS}_{\rho_0}}:H^0(\pi^*\Upsilon\vert_{\Mrho})^\Gamma \to H^1(\Gamma,\Hrho)
    \]
    is surjective.\\
    By proposition \ref{projetablevf}, we know that $H^0(\pi^*\Pi|_{\Mrho})^\Gamma$ is isomorphic to the image of $\Sl$ under the following map 
    \begin{align*}
        f:\Sl & \to \Sl \oplus B^1(\Gamma,\Slrho)\\
        X & \to (X,\phi:\gamma\mapsto X-Ad_{\rho(\gamma)}X).
    \end{align*}
    We obtain the following diagram of exact sequences 
    \begin{center}
        \begin{tikzcd}[column sep=1.6em]
            0 \ar[r] & \Sl^{\rho(\Gamma)}  \ar[r] \ar[d,"\wr"] & \Sl\oplus B^1(\Gamma,\Slrho) \ar[d,"\wr"] \ar[r] & Z^1(\Gamma,\Slrho) \ar[d,"\wr"] \ar[r, "\widetilde{\mathcal{KS_\rho}}"] & H^1(\Gamma,\Slrho) \ar[d,"\wr"] \\
            0 \ar[r] & \Hrho^\Gamma \ar[r]& H^0(\pi^*\Pi\vert_{\Mrho})^\Gamma \ar[r] & H^0(\pi^*\Upsilon\vert_{\Mrho})^\Gamma \ar[r, "\mathcal{KS_\rho}"] & H^1(\Gamma,\Hrho)
        \end{tikzcd}
    \end{center}
    We conclude that the Kodaira-Spencer map is nothing else than the natural projection $Z^1(\Gamma,\Slrho)\to H^1(\Gamma,\Slrho)$ which is surjective.
\end{proof}

\subsection{Equivariant slices and Kuranishi spaces.}

The geometry of a representation variety is in general very complicated and it is a hard game to find an $\SL$-equivariant slice which, as a germ at $\rho\in\Rep$, gives by proposition \ref{completeness} the Kuranishi space of $\rho$. However, few words can be said in general. 

\begin{corollary}
    Let $\rho\in \Rep^a$, then any complex analytic space transverse to the orbits passing through $\rho$ gives (as a germ) the Kuranishi space of $\Mrho$.
\end{corollary}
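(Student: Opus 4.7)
The plan is to combine Theorem \ref{completeness} and Proposition \ref{KSsurjective} with the standard characterization of the Kuranishi space as a complete deformation whose Kodaira-Spencer map at the base point is bijective.

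First, I would identify the tangent space to the $\SL$-orbit $\mathcal{O}(\rho) \subset \Rep^a$ at $\rho$. Differentiating the conjugation action at the identity yields $X \mapsto (\gamma \mapsto X - \Ad_{\rho(\gamma)} X)$, so $T_\rho \mathcal{O}(\rho)$ is exactly the subspace of coboundaries $B^1(\Gamma, \Slrho) \subset Z^1(\Gamma, \Slrho) \simeq T_\rho \Rep^a$. By the proof of Proposition \ref{KSsurjective}, the Kodaira-Spencer map $\mathcal{KS}_\rho$ of the tautological family $\mathfrak{X}\to\Rep^a$ at $\rho$ is the canonical projection $Z^1(\Gamma, \Slrho) \to H^1(\Gamma, \Slrho) \simeq H^1(\Mrho, \Theta_\rho)$, whose kernel therefore equals $T_\rho \mathcal{O}(\rho)$. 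Consequently, if $S$ is any complex analytic subspace of $\Rep^a$ transverse to the orbits through $\rho$, then $\mathcal{KS}_\rho$ restricts to a bijection $T_\rho S \to H^1(\Mrho, \Theta_\rho)$.

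Next I would transfer completeness from $\Rep^a$ to $S$. Given any deformation $Y\to(B,b_0)$ of $\Mrho$, Theorem \ref{completeness} provides an analytic germ $f:(B,b_0)\to(\Rep^a,\rho)$ inducing $Y$ from the tautological family. Transversality implies that the map $\Phi:\SL\times S \to \Rep^a$, $(g,s)\mapsto \iota_g\circ s$, has bijective differential at $(\Id,\rho)$, hence by the holomorphic inverse function theorem is a biholomorphism of germs. Its inverse produces a holomorphic retraction $\sigma:(\Rep^a,\rho)\to(S,\rho)$ together with an analytic assignment $\rho'\mapsto g(\rho')\in\SL$ conjugating each nearby representation onto its image in $S$; this conjugation induces a biholomorphism $\mathfrak{X}\simeq \sigma^*(\mathfrak{X}|_S)$ of deformation germs over $\Rep^a$. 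Composing, $\sigma\circ f$ induces $Y$ from $\mathfrak{X}|_S$, establishing completeness of $\mathfrak{X}|_S \to S$ at $\rho$.

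Finally, a complete deformation with bijective Kodaira-Spencer map at the base point is, by the uniqueness part of the Douady-Kuranishi theorem, the Kuranishi family; so $(S,\rho)$ realizes $\Kur(\Mrho)$ as a germ. The main obstacle I expect is producing the holomorphic retraction $\sigma$ together with the conjugation data in a way compatible with the family structure on $\mathfrak{X}$: this is essentially a local slice theorem for the $\SL$-action on $\Rep^a$, and it is where the transversality hypothesis is really used. The rest is then a direct assembly of Theorem \ref{completeness}, Proposition \ref{KSsurjective} and the standard Kuranishi characterization.
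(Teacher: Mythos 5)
Your overall strategy (restrict the tautological family to the transversal $S$, prove that the restricted family is complete with bijective Kodaira--Spencer map at $\rho$, then invoke uniqueness of the semi-universal deformation) is reasonable and, in spirit, an expansion of the paper's very terse argument, which only invokes the surjectivity of $\mathcal{KS}_\rho$ with kernel $B^1(\Gamma,\Slrho)=T_\rho\mathcal{O}(\rho)$ together with the fact that conjugate representations give biholomorphic quotients. However, there is a genuine gap exactly at the step you yourself single out as the crux, namely the transfer of completeness from $\Rep^a$ to $S$. Your claim that transversality makes $\Phi\colon \SL\times S\to\Rep^a$, $(g,s)\mapsto \iota_g\circ s$, have bijective differential at $(\Id,\rho)$ is false in general: the differential is $(X,v)\mapsto \delta X+v$ with $\delta X(\gamma)=X-\Ad_{\rho(\gamma)}X$, so even when $T_\rho S\cap B^1(\Gamma,\Slrho)=0$ its kernel is $H^0(\Gamma,\Slrho)\times\{0\}$, the Lie algebra of the centralizer $C_{\SL}(\rho(\Gamma))$. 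This is nonzero precisely when the centralizer is positive-dimensional, e.g. at the trivial representation and at all the abelian representations $\rho_{n,m}$ of the Weeks-manifold example --- the cases of main interest in the paper; a dimension count ($\dim(\SL\times S)=3+\dim S$, while the orbit has dimension $3-\dim C_{\SL}(\rho(\Gamma))$) confirms $\Phi$ cannot be a local biholomorphism there. Consequently the inverse function theorem does not apply, the retraction $\sigma$ and the holomorphic conjugating map $\rho'\mapsto g(\rho')$ are not produced, and completeness of the restricted family over $(S,\rho)$ is not established.

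Two further points. First, even when $C_{\SL}(\rho(\Gamma))=\{\pm\Id\}$, the germs of $\Rep^a$ and of $S$ at $\rho$ may be singular or non-reduced, and a morphism of analytic germs with bijective tangent map need not be an isomorphism (compare $V(x^2)\hookrightarrow V(x^3)$); so the ``holomorphic inverse function theorem'' cannot be invoked as stated --- what you actually need is a local slice statement of Luna type, which the paper only obtains later under additional hypotheses (closed orbit, $\rho(\gamma_1)$ semi-simple). Second, a smaller issue: transversality in the usual sense ($T_\rho S+B^1=Z^1$) only gives surjectivity of the restricted Kodaira--Spencer map; to get injectivity, hence semi-universality, you must take $T_\rho S$ complementary to $B^1(\Gamma,\Slrho)$, a strengthening you use silently (the paper's statement is admittedly just as imprecise, as the authors themselves remark). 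Your final reduction --- complete plus bijective Kodaira--Spencer map implies semi-universal, hence isomorphic to the Kuranishi germ --- is correct and standard; the missing piece is a valid argument for completeness of $\mathfrak{X}\vert_S\to S$ that does not presuppose a slice.
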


\begin{proof}
    The Kodaira-Spencer is surjective and two conjugated representations gives the same manifold $\Mrho$ up to biholomorphism.
\end{proof}

In opposition to this imprecise result, more acurate results can be state in some particular cases. For instance, assume that the first betti number of $\Gamma$ is one and let 
\[
    \langle \gamma_1,\cdots,\gamma_n \vert\, \mathcal{R} \rangle
\]
be a presentation of $\Gamma$ such that the natural projection $p:\Gamma\to \Gamma^{ab}\simeq \Z$ sends $\gamma_1$ to $1$. 

\begin{proposition}
    For all representation $\rho$ such that $\rho(\gamma_1)$ is semi-simple in $\SL$ then there exists an étale slice $V$ at $\rho$ in $\Rep$ such that $T_\rho V$ is isomorphic to $H^1(\Gamma,\Slrho)$.
\end{proposition}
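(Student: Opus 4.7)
The plan is to build $V$ in two stages, exploiting the semi-simplicity of $A:=\rho(\gamma_1)$: first reduce along the equivariant morphism $\pi\colon\Rep\to\SL$, $\rho'\mapsto\rho'(\gamma_1)$, by pulling back a slice at $A$ in $\SL$, and then slice the resulting variety by the residual action of the centralizer $C_\SL(A)$.

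Assume first that $A$ is regular (i.e.\ $A\neq\pm\Id$), so $C_\SL(A)=T$ is a maximal torus with Lie algebra $\mathfrak{t}=\Sl^A$ and one has the $\Ad_A$-stable decomposition $\Sl=\mathfrak{t}\oplus\mathrm{Im}(\Id-\Ad_A)$. The classical slice theorem for the $\SL$-conjugation on itself supplies an étale slice $S\subset\SL$ at $A$ with $T_AS=\mathfrak{t}$, concretely a small analytic neighbourhood of $A$ in $T$. Setting $V_0:=\pi^{-1}(S)$, a scheme-theoretic fiber-product computation gives
\[
T_\rho V_0\ =\ \{\, c\in Z^1(\Gamma,\Slrho)\ \vert\ c(\gamma_1)\in\mathfrak{t}\,\},
\]
and $\SL\times^T V_0\to\Rep$ is étale at $(\Id,\rho)$ (using that every coboundary already satisfies $\partial X(\gamma_1)=X-\Ad_A X\in\mathrm{Im}(\Id-\Ad_A)$). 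I then apply Luna's slice theorem to the $T$-action on $V_0$ at $\rho$: the stabilizer $C:=C_\SL(\rho(\Gamma))$ sits inside the one-dimensional torus $T$, hence is reductive, so one extracts a $C$-invariant analytic slice $V\subseteq V_0$ with $T_\rho V$ a $C$-invariant complement to the orbit tangent $\partial\mathfrak{t}:=\{\partial X\ \vert\ X\in\mathfrak{t}\}$ in $T_\rho V_0$. Composing both slicings produces the étale map $\SL\times^C V\to\Rep$ at $(\Id,\rho)$, so $V$ is an étale slice at $\rho$.

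It remains to identify $T_\rho V\simeq H^1(\Gamma,\Slrho)$ by checking transversality to $B^1(\Gamma,\Slrho)$ in $Z^1(\Gamma,\Slrho)$. For $T_\rho V\cap B^1=\{0\}$: any $c=\partial X\in T_\rho V$ satisfies $c(\gamma_1)=X-\Ad_A X\in\mathfrak{t}\cap\mathrm{Im}(\Id-\Ad_A)=\{0\}$, which forces $X\in\mathfrak{t}$ and hence $c\in\partial\mathfrak{t}$; the transversality $T_\rho V\cap\partial\mathfrak{t}=\{0\}$ then gives $c=0$. For $T_\rho V+B^1=Z^1$: given $c\in Z^1$, decompose $c(\gamma_1)=X_0+(X-\Ad_A X)$ with $X_0\in\mathfrak{t}$ and $X\in\Sl$, subtract $\partial X$ to land in $T_\rho V_0$, then split along $T_\rho V_0=T_\rho V\oplus\partial\mathfrak{t}\subset T_\rho V+B^1$.

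The main obstacle is the first stage: $\pi$ need not be smooth at $\rho$, so one has to verify by hand that the scheme-theoretic preimage $V_0=\pi^{-1}(S)$ really behaves as a $T$-equivariant slice, which reduces to combining the tangent-space identification above with the fact that coboundaries surject onto $\mathrm{Im}(\Id-\Ad_A)$ at $\gamma_1$. The central case $A=\pm\Id$, where $C_\SL(A)=\SL$ and the two-stage reduction degenerates, must be treated separately---presumably by applying Luna's theorem directly to the $\SL$-action on $\Rep$ after establishing reductivity of $C_\SL(\rho(\Gamma))$, an assumption that does not follow from the hypotheses alone.
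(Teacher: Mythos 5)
Your two-stage construction is a genuinely different route from the paper's, but it is missing the decisive input. The paper applies Luna's étale slice theorem once, directly to the $\SL$-action on $\Rep$ at $\rho$: the role of the hypothesis that $\rho(\gamma_1)$ is semi-simple (in the setting $b_1(\Gamma)=1$ with $\gamma_1$ generating the abelianization) is to guarantee that the orbit of $\rho$ is \emph{closed}, via Sikora's characterization of closed orbits as those of completely reducible representations \cite{Sikora}; Luna then produces, as in \cite{BenAbdelghani}, a slice whose tangent space is a complement of $B^1(\Gamma,\Slrho)$ in $Z^1(\Gamma,\Slrho)$, i.e. $H^1(\Gamma,\Slrho)$. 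In your proposal the closed-orbit condition never appears: in the second stage you invoke Luna's theorem for the $T$-action on $V_0$ on the strength of the reductivity of $C_{\SL}(\rho(\Gamma))\subset T$ alone, but reductivity of the stabilizer is not the hypothesis of Luna's theorem --- closedness of the orbit $T\cdot\rho$ in $V_0$ is, and reductivity is a consequence of it (Matsushima), not an input. This is exactly where the specific hypotheses of the proposition must enter; tellingly, your argument never uses $b_1(\Gamma)=1$ nor that $\gamma_1$ generates $\Gamma^{ab}$. For a reducible non-split $\rho$ whose restriction to $\gamma_1$ is regular semi-simple (Burde--de Rham type representations), conjugating by $\mathrm{diag}(t,t^{-1})$ scales the off-diagonal part and degenerates, as $t\to 0$, to the semi-simplification, which lies in $V_0$ but not in the orbit; so the $T$-orbit is not closed and your appeal to Luna breaks down. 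Any correct proof has to rule out this situation, which is what the paper's citation of Sikora (complete reducibility $\Leftrightarrow$ closed orbit) is doing.

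Two further points. First, the étaleness of $\SL\times^T V_0\to\Rep$ cannot be established by the tangent-space computation you give, since $\Rep$ may be singular or even non-reduced at $\rho$; it does hold, but the clean argument is base change: $\SL\times^T\pi^{-1}(S)$ is the fibre product of $\pi:\Rep\to\SL$ with the étale map $\SL\times^T S\to\SL$, and étaleness is preserved under base change --- this removes your first ``obstacle'' (you should also take $S$ algebraic, e.g. a $T$-stable affine open of $T$, both to speak of étale maps and so that $V_0$ is an affine $T$-variety when you later want a slice theorem for it). Second, the central case $\rho(\gamma_1)=\pm\Id$ is left entirely open in your write-up, and your suggested fix again asks for reductivity of $C_{\SL}(\rho(\Gamma))$ rather than for the closed orbit. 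In the paper's approach no case distinction is needed: once the orbit is known to be closed, one application of Luna's theorem to the $\SL$-action settles all cases uniformly, and the identification $T_\rho V\simeq Z^1(\Gamma,\Slrho)/B^1(\Gamma,\Slrho)=H^1(\Gamma,\Slrho)$ is immediate because the tangent space to the orbit is $B^1(\Gamma,\Slrho)$. Your transversality computation in the regular case is fine as far as it goes, but it rests on the two unproved slicing steps above.
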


Since the orbit of such representation is closed \cite[Theorem 30]{Sikora}, this proposition is given by the Luna's slice theorem as in \cite[Proposition 2.8]{BenAbdelghani}.

\section{Example.}\label{example}

We give an example which emphasize the main contribution of this this work compare to \cite{Ghys}. The Weeks manifold $M_W$ is known to has the smallest volume among hyperbolic $3$-manifold with first Betti number $0$. Among all its properties, it is compact, closed, oriented, arithmetic which turn it into a particular case of interest in this note. We also have a presentation of its fundamental group
\[
    \pi_1(M_W)=\langle a,b\vert\, a^2b^2a^2b^{-1}ab^{-1}, a^2b^2a^{-1}ba^{-1}b^2 \rangle
\]
and one discrete and faithful $\SL$-representation of this group is given by 
\[
    \eta(a)=\begin{pmatrix} x & 1 \\ 0 & x^{-1} \end{pmatrix}, \qquad \eta(b)=\begin{pmatrix} x & 0 \\ r & x^{-1} \end{pmatrix}
\]
with $r=2-x-x^{-1}$ and $1+2x^2-x^3+2x^4+x^6=0$ (see \cite{Weeks}). Up to the choice of $x$ we fix $\Gamma_W$ to be the image of $\pi_1(M_W)$ under this representation.

With this presentation, it is clear that this manifold has betti number equal to $0$ and it follows that the trivial representation is an isolated point in representation variety and that $\SL/\Gamma_W$ is rigid (in the sense that the Kuranishi space is a point). But it is not globally rigid. 

Consider the case of a representation of $\Gamma_W$ with abelian image. Direct computations gives that, up to conjugation, it is given by
\[
    \displaystyle \rho_{n,m}(a)=\begin{pmatrix} \omega_m & 0 \\ 0 & \omega_m^{-1} \end{pmatrix}, \qquad \rho_{n,m}(b)=\begin{pmatrix} \omega_n & 0 \\ 0 & \omega_n^{-1}  \end{pmatrix}
\]
for $\omega_k=e^{\frac{2ik\pi}{5}}$ and $m,n=0,\cdots, 4$. Those representations $\rho_{n,m}$ have obviously images in a compact subgroup of $\SL$ and the properness criterion \eqref{propernesscondition} implies that the induced action is admissible.

Let treat the non abelian case. It is a general fact that $\rho(a)$ and $\rho(b)$ can be of the form 
\[
    \rho(a)=\begin{pmatrix} x & 1 \\ 0 & x^{-1} \end{pmatrix}, \qquad \rho(b)=\begin{pmatrix} y & 0 \\ r & y^{-1} \end{pmatrix}
\]
with $x,y\neq 0$. According to \cite[p. 24-25]{Weeks}, the second relation in the presentation of $\pi_1(M_W)$ implies that $y$ is either equal to $x$ or $x^{-1}$. Moreover, if $r\neq 0$ and $x$ is not a root of unity, the representation is faithful and it follows that it is not admissible (as remarked in the proof of proposition \ref{projetablevf}). Following the computations of \cite{Weeks}, none of these cases have a solution (except $x^5=1$, $y=1$ and $r=0$ which leads to an abelian representation) and we conclude that the admissible locus of $\mathcal{R}(\Gamma_W)$ contains only representations with abelian images.

By the open criterion of $\mathcal{R}(\Gamma_W)$ in the Teichmüller stack, we know that $\left\vert \mathcal{T}(\SL/\Gamma_W) \right\vert$ (the underlying topological space) contains at least $25$ isolated points as connected components, corresponding to the representations $\rho_{n,m}$, $n,m=0,\cdots,4$.

\begin{remark}
    There are plenty of examples, but few of them are actually "computable" since the complexity in finding a discrete and faithful representation increase with the complexity of the relations. However, we can mention another example: the manifold $v1539(5,1)$ (in the notations of SnapPy). This manifold has a first Betti number of $2$. It is a closed, oriented, compact and hyperbolic $3$-fold which fiber over the circle. We have a presentation of its fundamental group with SnapPy and by same arguments as above, one can show that the quotient stack
    \[
        \left\{A,B\in \SU\times \SU\vert\, [A,B]=\Id \right\}/\SL 
    \]
    is in the Teichmüller space and corresponds to a subset of the set of abelian representations. But it is a hard game to find other admissible representations since the numerical criterion \eqref{propernesscondition} not allow us to do real computations.
\end{remark}

\section*{Acknowledgement}
    I would like to express my deep gratitude to Laurent \textsc{Meersseman} and Marcel \textsc{Nicolau}, my research supervisors, for their patient guidance, enthusiastic encouragement and useful critics of this research work. I would also like to thank Alberto \textsc{Verjovsky}, Joan \textsc{Porti} and Nicolas \textsc{Tholozan} for interesting discussions.


\begin{thebibliography}{99}
\bibitem{Artin}
    \textsc{M. Artin}:
    \emph{On the Solutions of Analytic Equations}.
    Inventiones mathematicae 5 (1968), p. 277-291.
    \\ \url{http://eudml.org/doc/141922}

  
\bibitem{AlgebraicStacks}
    \textsc{K. Behrend, B. Conrad, D. Edidin, W. Fulton, B. Fantechi, L. Göttsche} and \textsc{A. Kresch}:
    \emph{Algebraic stacks}.
    (2014), available online from Kresch's webpage
    \\ \url{https://www.math.uzh.ch/index.php?id=home&pr_vo_det=&key1=1287&key2=580&no_cache=1}


\bibitem{BenAbdelghani}
    \textsc{L. Ben Abdelghani}:
    \emph{Variété des caractères et slice étale de l'espace des
    représentations d'un groupe}.
    Annales de la Faculté des sciences de Toulouse : Mathématiques, Série 6, Tome 11 (2002), no. 1, p. 19-32.
    \\ \url{https://afst.centre-mersenne.org/item/AFST_2002_6_11_1_19_0/}.
    
  
\bibitem{Brown}
    \textsc{K. Brown}:
    \emph{Cohomology of groups}.
    (1994): available on Springer.
    \\ \url{https://link.springer.com/book/10.1007/978-1-4684-9327-6}.
    

\bibitem{Weeks}
    \textsc{T. Chinburg, E. Friedman, K.N. Jones} and \textsc{A.W Reid}
    \emph{The arithmetic hyperbolic $3$-manifold of smallest volume}. Annali della Scuola Normale Superiore di Pisa - Classe di Scienze, Série 4, Tome 30 (2001), no. 1, p. 1-40. \\ \url{http://www.numdam.org/item/ASNSP_2001_4_30_1_1_0/}.

  
\bibitem{Douady}
    \textsc{A. Douady}:
    \emph{Obstruction primaire à la déformation}.
    Séminaire Henri Cartan, Tome 13 (1960-1961) no. 1, Exposé no. 4, 19 p. 1-19.
    \\ \url{http://www.numdam.org/article/SHC_1960-1961__13_1_A3_0.pdf}.

  
\bibitem{Ghys}
    \textsc{E. Ghys}:
    \emph{Déformations des structures complexes sur les espaces
    homogènes de $\SL$}.
    Journal für die reine und angewandte Mathematik 468 (1995), p. 113-138.
    \\ \url{http://eudml.org/doc/153767}.


\bibitem{Goldman}
    \textsc{W. Goldman}:
    \emph{Locally homogeneous geometric manifolds}.
    Proceedings of the 2010 International Congress of Mathematicians, Hyderabad, India (2010), p. 717-744, Hindustan Book Agency, New Delhi, India.
    \\ \url{https://arxiv.org/abs/1003.2759}.
    
    
\bibitem{GoldmanMillsonKahler}
    \textsc{W. Goldman} and \textsc{J.J. Millson}:
    \emph{The deformation theory of representations of fundamental
    groups of compact Kähler manifolds}.
    Publications Mathématiques de L’Institut des Hautes Scientifiques 67 (1988), p. 43–96.
    \\ \url{https://doi.org/10.1007/BF02699127}.


\bibitem{GromovGeometricStructure}
    \textsc{M. Gromov}:
    \emph{Lectures on the transformation groups: geometry and dynamics}. (1991).
    \\ \url{https://www.ihes.fr/~gromov/expository/31/}.


\bibitem{Gueritaud}
    \textsc{F. Guéritaud, O. Guichard, F. Kassel} and \textsc{A. Wienhard}:
    \emph{Anosov representations and proper actions}.
    Geometry \& Topology 1, (2017), p. 485-584.
    \\ \url{https://msp.org/gt/2017/21-1/gt-v21-n1-p11-s.pdf}.


\bibitem{Helgason}
    \textsc{S. Helgason}:
    \emph{Differential Geometry, Lie Groups, and Symmetric Spaces. ISSN}.
    Elsevier Science, (1979).
    \\ \url{https://books.google.fr/books?id=DWGvsa6bcuMC}.
  
  
\bibitem{Heusener}
    \textsc{M. Heusener}:
    \emph{$\SLn$-Representation spaces of Knot Groups}.
    (2016), available on Hal.
    \\ \url{https://hal.archives-ouvertes.fr/hal-01272492/document}.
    

\bibitem{HeusenerPorti}
    \textsc{M. Heusener, J. Porti} and \textsc{E. Suárez Peiro}:
    \emph{Deformations of reducible representations of 3-manifold groups into $\SL$}.
    Journal für die reine und angewandte Mathematik 530 (2001), p. 191-227.
    \\ \url{https://www.degruyter.com/view/journals/crll/2001/530/article-p191.xml}.
  
  
\bibitem{kasselproper}
    \textsc{F. Kassel}:
    \emph{Proper actions on corank-one reductive homogeneous spaces}.
    Journal of Lie Theory 18 (2008), p. 961-978.
    \\ \url{https://www.heldermann-verlag.de/jlt/jlt18/kassella2e.pdf}.
    
    
\bibitem{LubotzkyMagid}
    \textsc{A. Lubotzky} and \textsc{A.R. Magid}:
    \emph{Varieties of Representations of Finitely Generated Groups}.
    American Mathematical Society: Memoirs of the American Mathematical Society, (1985).
    \\ \url{https://books.google.fr/books?id=\_DvUCQAAQBAJ}.


\bibitem{Meersseman2}
    \textsc{L. Meersseman}:
    \emph{A note on the automorphism group of a compact complex
    manifold}.
    L'Enseignement Mathématique, 63 (2017), no.3-4, p. 263-272.
    \\ \url{https://hal.archives-ouvertes.fr/hal-01399727}.


\bibitem{MeerssemanTeichStack}
    \textsc{L. Meersseman}:
    \emph{The Teichmüller and Riemann moduli stacks}.
    Journal de l'\'Ecole polytechnique - Mathématiques, 6
    (2019), p. 879-945.
    \\ \url{https://jep.centre-mersenne.org/item/JEP_2019__6__879_0/}.
  

\bibitem{MillsonKapovich}
    \textsc{J. Millson} and \textsc{M. Kapovich}:
    \emph{On representation varieties of 3-manifold groups}.
    Geom. Topol. 21 (2017), no. 4, 1931-1968. 
    \\ \url{https://projecteuclid.org/euclid.gt/1508437634}.
    

\bibitem{Morrow-Kodaira}
    \textsc{J. Morrow} and \textsc{K. Kodaira}:
    \emph{Complex manifolds}.
    AMS Chelsea Publishing, Providence, RI, (2006).


\bibitem{Mumford}
    \textsc{D. Mumford}:
    \emph{Abelian varieties} (with appendices by C. P. Ramanujam and Y. Manin)
    Tata Institute of Fundamental Research Studies in Mathematics 5, by Hindustan Book Agency, New Delhi, (2008).


\bibitem{Newlander-Niremberg}
    \textsc{A. Newlander} and \textsc{L. Nirenberg}:
    \emph{Complex Analytic Coordinates in Almost Complex Manifolds}.
    Annals of Mathematics 65, (1957), p. 391-404.


\bibitem{Sikora}
    \textsc{A.S. Sikora}:
    \emph{Character Varieties}.
    Transactions of the American Mathematical Society 364, no. 10 (2012), 5173-208.
    \\ \url{http://www.jstor.org/stable/41638938}.

  
\bibitem{stacks-project}
    \textsc{The {Stacks Project Authors}}:
    \emph{Stacks Project}, (2018)
    \\ \url{https://stacks.math.columbia.edu}.
  

\bibitem{Tholozan}
    \textsc{N. Tholozan}:
    \emph{Sur la complétude de certaines variétés pseudo-riemanniennes localement symétriques}. (2013), available on Numdam.
    \\ \url{http://www.numdam.org/item/AIF_2015__65_5_1921_0/}.

  
\bibitem{Thurston}
    \textsc{W. Thurston}:
    \emph{Three-dimensional geometry and topology. Vol.\,1}.
    Princeton Mathematical Series, 35,
    Princeton University Press, Princeton, NJ, (1997).
    \end{thebibliography}
\end{document}